\documentclass[12pt]{article}
\usepackage{amsmath,latexsym}
\usepackage{amsthm}
\usepackage{amssymb}
\usepackage{leftidx}
\usepackage{wasysym}
\usepackage[noblocks]{authblk}
\textheight=8.25in \topmargin=0in \textwidth=6.25in
\oddsidemargin=0.28in
\def\q{\quad}
\def\qq{\qquad}

\def\f{\frac}

\def\b{\binom}

\def\G#1{\Gamma(#1)}

\def\Fa{\Bbb F}

\def\Fq{\widehat{\Bbb F_q}}

\allowdisplaybreaks
\numberwithin{equation}{section}
\theoremstyle{plain}
\newtheorem{thm}{Theorem}[section]
\theoremstyle{definition}
\newtheorem{df}{Definition}[section]
\theoremstyle{Proposition}
\newtheorem{pro}{Proposition}[section]
\theoremstyle{lemma}

\theoremstyle{Corollary}
\newtheorem{cor}{Corollary}[section]

\begin{document}
%\begin{CJK}{GBK}{song}
\title{\bf \Large Some new formulas for Appell series\\ over finite fields
\thanks{This work was supported by the Natural Science Foundation of China (grant No.11571114).}}
\date{}
\author{Long Li\thanks{Corresponding author.\newline \indent
   E-mail addresses: lilong6820@126.com (Long Li), lixin8138@163.com (Xin Li), maorui1111@163.com (Rui Mao).}, Xin Li and Rui Mao\\
   \small{Department of Mathematics, East China Normal University, 500 Dongchuan Road, Shanghai, 200241, PR China}}
%\author[add1]{Aaron\corauthref{cor1}}
%\author{\CJKfamily{fs}{Long Li$^{a,\star}$, Xin Li$^b$ and Rui Mao$^c$ }
%\\[2pt]
%{\small $^{a,b,c}$ Department of Mathematics, East China Normal University, 500 Dongchuan Road,}\\
%{\small Shanghai, 200241, PR China}\\}
%\author {Long Li\\Department of Mathematics, East China Normal University,\\
%Shanghai, 200241, PR China\\Email:\ lilong6820@126.com.\\
%Xin Li\\Department of Mathematics, East China Normal University,\\
%Shanghai, 200241, PR China\\Email:\ lixin8138@163.com. }
\maketitle
\begin{abstract} In 1987 Greene introduced the notion of the finite field analogue of hypergeometric series. In this paper
we give a finite field analogue of Appell series and obtain some
transformation and reduction formulas. We also establish the
generating functions for Appell series over finite fields.
\end{abstract}
\textit{\bf Keywords:} Appell Series; Finite Fields; Transformation Formulas; Reduction Formulas; Generating Functions \\
\textit{\bf Mathematics Subject Classification 2010:} 33C65; 11T24; 11L99  \\
\section{Introduction}
 \q  We first introduce some notation. Let $p$ denote a prime and $r$ a positive integer. Set $q=p^r$
 and denote $\Bbb F_q$ as the finite field of $q$ elements. Let $\widehat{\Bbb F_q^*}$
 be the group of multiplicative characters of $\Bbb F_q^*$. We extend the domain of all
 characters $\chi$ of $\Bbb F_q^*$ to $\Bbb F_q$ by defining
$\chi(0)=0$ including trivial character $\varepsilon$, and denote
$\overline\chi$ as the inverse of $\chi$. For a more detailed
introduction to characters, see \cite[Chapter 8]{KM} and \cite{BRE}.
We note that although \cite{BRE} contains proofs of most results
involving characters, care must be taken for cases involving the
trivial character $\varepsilon$, which is defined to be $1$ at $0$
in this reference.

 Following Bailey \cite{B2}, the classical generalized hypergeometric series is defined as
\begin{equation}\label{eq1.1}_{r+1}F_s\left[\begin{array}{cccc}a_0,&a_1,&\ldots,&a_r\\&b_1,&\ldots,&b_s
\end{array};z\right]=\sum_{k=0}^\infty\f{(a_0)_k
(a_1)_k\cdots(a_r)_k}{(b_1)_k\cdots(b_s)_k}\f{z^k}{k!},\end{equation} where $(a)_k=a(a+1)\cdots (a+k-1).$

In 1987, Greene \cite{GJ} developed the theory of hypergeometric
functions over finite fields and he also proved a number of
transformation and summation identities for hypergeometric series
over finite fields that are analogues of those in the classical
case. For characters $A,B,C\in\Fq$ and $x\in\Bbb F_q$, Greene
\cite{GJ}
 defined $$_2\Bbb F_1\left[\begin{array}{rr}A,&B\\&C\end{array};x\right]^G=\varepsilon(x)
\f{BC(-1)}q\sum_uB(u)\overline BC(1-u)\overline A(1-ux)$$ as the
finite field analogue of the integral representation of Gauss
hypergeometric series \cite{ARR, B2}:
$$_2F_1\left[\begin{array}{rr}a,&b\\&c\end{array};x\right]=\f{\G c}{\G b\G{c-b}}
\int_0^1t^b(1-t)^{c-b}(1-tx)^{-a}\f{dt}{t(1-t)}.$$
 And he also introduced $$\b AB^G=\f{B(-1)}qJ(A,\overline B),$$ where $J(\cdot,\cdot)$ denotes the classical Jacobi
 sum. In fact, $\b AB^G$ could be regarded as the finite field analogue of the binomial
coefficient.
 For the sake of simplicity, we define $$\b AB=B(-1)J(A,\overline{B})=q\b AB^G$$ and \begin{equation}\label{eq1.3}
  _2\Bbb F_1\left[\begin{array}{rr}A,&B\\&C\end{array};x\right]=q\ _2\Bbb F_1\left[\begin{array}{rr}A,&B\\&C\end{array};x\right]^G.\end{equation}
Now we restate two theorems of Greene in our notation.
\begin{thm}\label{thm1.1}\rm{\cite[Theorem 3.6]{GJ}} For characters $A,B,C\in \Fq$ and $x\in \Bbb F_q$ we have
$$ _2\Bbb F_1\left[\begin{array}{rr}A,&B\\&C\end{array};x\right]= \f1{q-1}\sum_\chi\b{A\chi}\chi\b{B\chi}{C\chi}\chi(x).$$
\end{thm}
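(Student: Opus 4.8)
The plan is to prove Theorem~\ref{thm1.1} by starting from the integral-type definition of $_2\Bbb F_1$ and inserting the orthogonality relations for multiplicative characters. Concretely, recall that $$_2\Bbb F_1\left[\begin{array}{rr}A,&B\\&C\end{array};x\right]=\varepsilon(x)BC(-1)\sum_u B(u)\overline BC(1-u)\overline A(1-ux).$$ First I would expand each of the two factors $\overline BC(1-u)$ and $\overline A(1-ux)$ using the finite field binomial theorem: for a character $D$ and any $y\in\Bbb F_q$ one has the expansion $D(1-y)=\delta(y)+\f1{q-1}\sum_\chi \b D\chi \chi(y)$ (equivalently in Greene's normalization), which is itself a consequence of character orthogonality applied to the Gauss-sum factorization of $J(D,\overline\chi)$. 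After substituting both expansions, the sum over $u$ decouples into an inner sum $\sum_u B(u)\,\chi_1(u)\,\chi_2(u)$ type expression (times powers of $x$ carried out by the second binomial expansion), and this inner sum is a Jacobi sum or, after a change of variables, collapses via orthogonality to force a relation between the two summation characters.

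The key steps, in order, are: (i) write out the definition and pull the $\varepsilon(x)BC(-1)$ prefactor aside; (ii) apply the binomial expansion to $\overline BC(1-u)$, producing $\f1{q-1}\sum_{\chi_1}\b{\overline BC}{\chi_1}\chi_1(u)$ plus a delta term; (iii) apply the binomial expansion to $\overline A(1-ux)$, producing $\f1{q-1}\sum_{\chi_2}\b{\overline A}{\chi_2}\chi_2(ux)$ plus a delta term; (iv) interchange the sum over $u$ with the double sum over $\chi_1,\chi_2$, so that the $u$-sum becomes $\sum_u B\chi_1\chi_2(u)$, which by orthogonality equals $q-1$ when $B\chi_1\chi_2=\varepsilon$ and $0$ otherwise; (v) use this to eliminate one of the two character sums (say set $\chi_2=\overline B\,\overline{\chi_1}$), leaving a single sum over $\chi:=\chi_1$; (vi) carefully track the delta terms and the $\varepsilon(x)$ factor, and finally simplify the resulting binomial coefficients $\b{\overline BC}{\chi}$ and $\b{\overline A}{\overline B\overline\chi}$ into the claimed form $\b{A\chi}{\chi}\b{B\chi}{C\chi}$ using the symmetry/reflection identities for $\b\cdot\cdot$ (which reduce to standard Jacobi sum identities such as $J(D,E)=D(-1)J(D,\overline{DE})$ and $J(D,\overline D)=-D(-1)+(q-1)\delta(D)$).

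I expect the main obstacle to be step (vi): the bookkeeping of the several degenerate contributions — the $\delta(u)$ term from the first binomial expansion, the $\delta(ux)$ term from the second, the cross term where both deltas appear, and the boundary characters $\chi$ for which one of the binomial coefficients degenerates (i.e.\ where a Jacobi sum $J(D,\overline\chi)$ must be replaced by $-D(-1)+(q-1)\delta$). One must check that all these stray pieces either cancel among themselves or precisely reconstitute the $\chi$-values at which the clean formula $\f1{q-1}\sum_\chi\b{A\chi}\chi\b{B\chi}{C\chi}\chi(x)$ is itself defined by a limiting/degenerate convention. The manipulations in steps (i)--(v) are routine orthogonality, but getting the edge cases to match is where the care is needed; this is exactly the point flagged in the excerpt about the trivial character $\varepsilon$ and the value at $0$.
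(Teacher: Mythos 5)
The paper never proves Theorem \ref{thm1.1}; it is quoted from Greene \cite[Theorem 3.6]{GJ}, and the only in-paper argument of this type is the proof of Theorem \ref{thm1.3}, which is the exact two-variable analogue. Measured against that, your outline is the right kind of argument and would go through, but it takes a more laborious route. The paper's method expands only the factor carrying the variable $x$, writing $\overline A(1-ux)=\delta(ux)+\f1{q-1}\sum_\chi\b{A\chi}{\chi}\chi(ux)$, notes that $\varepsilon(x)B(u)\delta(ux)=0$ kills the delta term, and then recognizes the remaining $u$-sum $\sum_uB\chi(u)\,\overline BC(1-u)$ directly as the Jacobi sum $J(B\chi,\overline BC)=BC(-1)\b{B\chi}{C\chi}$ (via $\b AB=\b A{A\overline B}$), which cancels the prefactor $BC(-1)$ and finishes in two lines. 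You instead expand both $\overline BC(1-u)$ and $\overline A(1-ux)$ and collapse the $u$-sum by orthogonality, forcing $\chi_2=\overline{B\chi_1}$; this is correct and yields the same answer after reindexing and one use of the symmetry $J(D,E)=J(E,D)$, but it costs an extra character sum and extra binomial-coefficient gymnastics that the single-expansion route avoids. Two corrections to your sketch: first, the expansion coefficient you write for $\overline BC(1-u)$ should be $\b{B\overline C\chi_1}{\chi_1}$ in the normalization of Proposition \ref{pro2.2}, not $\b{\overline BC}{\chi_1}$; second, the difficulty you anticipate in step (vi) is largely illusory --- every delta contribution is annihilated by the factor $B(u)$ at $u=0$ or by $\varepsilon(x)$ at $x=0$, and the identities $\b AB=\b A{A\overline B}$ and the Jacobi-sum symmetry hold for all characters without degenerate corrections, so no case analysis on the trivial character is actually required for this identity (unlike for the product formula in Proposition \ref{pro2.1}, which your argument never needs).
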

\begin{thm}\label{tm1.2}\rm{\cite[Theorem 4.9]{GJ}} For characters $A,B,C\in \Fq$ we have
$$ _2\Bbb F_1\left[\begin{array}{rr}A,&B\\&C\end{array};1\right]= A(-1)\b{B}{\overline AC}.$$
\end{thm}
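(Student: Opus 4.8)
The plan is to feed $x=1$ into Greene's character-sum expansion (Theorem~\ref{thm1.1}), rewrite the two finite-field binomial coefficients as Jacobi sums, cancel the sign factors $\chi(-1)$, and then collapse the resulting double character sum by orthogonality. First I would set $x=1$ in Theorem~\ref{thm1.1}. Since $\chi(1)=1$ for every multiplicative character $\chi$ (the trivial character $\varepsilon$ included, as $\varepsilon(1)=1$), this yields
$$_2\Bbb F_1\left[\begin{array}{rr}A,&B\\&C\end{array};1\right]=\f1{q-1}\sum_\chi\b{A\chi}\chi\b{B\chi}{C\chi}.$$
Using the normalization $\b XY=Y(-1)J(X,\overline Y)$ I would then write $\b{A\chi}\chi=\chi(-1)J(A\chi,\overline\chi)$ and $\b{B\chi}{C\chi}=C(-1)\chi(-1)J(B\chi,\overline C\,\overline\chi)$; in the product the factor $\chi(-1)^2=\chi(1)=1$ drops out, leaving
$$_2\Bbb F_1\left[\begin{array}{rr}A,&B\\&C\end{array};1\right]=\f{C(-1)}{q-1}\sum_\chi J(A\chi,\overline\chi)\,J(B\chi,\overline C\,\overline\chi).$$

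Next I would expand each Jacobi sum by its definition $J(X,Y)=\sum_{t}X(t)Y(1-t)$, interchange the (finite) summations, and separate out the $\chi$-dependent factor. The terms in which one of the two summation variables $u,v$ equals $0$ or $1$ contribute nothing because $\chi(0)=0$ for all $\chi$; for the surviving terms the $\chi$-part is $\chi\!\big(uv/((1-u)(1-v))\big)$, and the orthogonality relation $\sum_\chi\chi(w)=q-1$ if $w=1$ and $0$ otherwise forces $v=1-u$. The double sum therefore collapses to $(q-1)\sum_u(A\overline C)(u)B(1-u)=(q-1)J(A\overline C,B)$, so that $_2\Bbb F_1[A,B;C;1]=C(-1)J(A\overline C,B)$.

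It remains to match this with the asserted value. Expanding the right-hand side of the statement, $A(-1)\b{B}{\overline AC}=A(-1)(\overline AC)(-1)J(B,A\overline C)=C(-1)J(B,A\overline C)$, since $A(-1)\overline A(-1)=\varepsilon(-1)=1$ and $\overline{\overline AC}=A\overline C$; by the symmetry $J(X,Y)=J(Y,X)$ this equals $C(-1)J(A\overline C,B)$, exactly the quantity produced in the previous paragraph. The argument is almost entirely mechanical; the only points that need a little care are checking that the degenerate terms really do drop out (so the orthogonality step is legitimate) and keeping the sign characters $\chi(-1),C(-1),A(-1)$ straight. I do not expect a serious obstacle.
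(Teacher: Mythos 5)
Your argument is correct: setting $x=1$ in Theorem~\ref{thm1.1}, converting both binomial coefficients to Jacobi sums, expanding those and collapsing the double sum with the orthogonality relation of Proposition~\ref{pro2.3} does give $C(-1)J(A\overline C,B)$, and the bookkeeping you do at the end correctly identifies this with $A(-1)\b{B}{\overline AC}$; the degenerate terms with $u$ or $v$ in $\{0,1\}$ vanish exactly as you say because every character, including $\varepsilon$, is defined to be $0$ at $0$. There is nothing in the paper to compare this against, since Theorem~\ref{tm1.2} is quoted from Greene without proof; but it is worth noting that a shorter route bypasses Theorem~\ref{thm1.1} entirely. Putting $x=1$ directly into the defining sum gives
\begin{equation*}
\ _2\Bbb F_1\left[\begin{array}{rr}A,&B\\&C\end{array};1\right]=BC(-1)\sum_uB(u)\,\overline{AB}C(1-u)=BC(-1)J(B,\overline{AB}C),
\end{equation*}
and one application of the reflection $\b XY=\b X{X\overline Y}$ from Proposition~\ref{pro2.1} turns $A(-1)\b{B}{\overline AC}$ into $A(-1)\b{B}{AB\overline C}=B\overline C(-1)J(B,\overline{AB}C)$, which is the same quantity since $\overline C(-1)=C(-1)$. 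Your route costs an extra interchange-and-orthogonality computation but is equally valid, and it has the minor virtue of only using the binomial-sum representation rather than the integral-type definition.
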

Theorem 1.2 is the finite field analogue of Gauss's evaluation
\cite{ARR, B2}:
$$_2F_1\left[\begin{array}{rr}a,&b\\&c\end{array};1\right]=\f{\G c\G{c-a-b}}{\G{c-a}\G{c-b}}.$$
For more information about the finite field analogue of
(\ref{eq1.1}), please consult, for example, \cite{FLR, DM}.

There are many double hypergeometric functions which are important
in the field of hypergeometric functions \cite{EH,SK}. Appell's four
functions \cite{AP, AP1, PJ}, which are shown as follows, may be the
most famous and well-known functions:
$$F_1(a;b,b';c;x,y)=\sum_{m\geq0}\sum_{n\geq0}\f{(a)_{m+n}(b)_m(b')_n}{m!n!(c)_{m+n}}x^my^n, \qq |x|,|y|<1,$$
$$F_2(a;b,b';c,c';x,y)=\sum_{m\geq0}\sum_{n\geq0}\f{(a)_{m+n}(b)_m(b')_n}{m!n!(c)_{m}(c')_n}x^my^n, \qq |x|+|y|<1,$$
$$F_3(a,a';b,b';c;x,y)=\sum_{m\geq0}\sum_{n\geq0}\f{(a)_m(a')_n(b)_m(b')_n}{m!n!(c)_{m+n}}x^my^n, \qq
|x|,|y|<1,$$
$$F_4(a;b;c,c';x,y)=\sum_{m\geq0}\sum_{n\geq0}\f{(a)_{m+n}(b)_{m+n}}{m!n!(c)_{m}(c')_n}x^my^n,
\qq |x|^{\f12}+|y|^{\f12}<1.$$

For more details about Appell series, please refer to \cite{JP, MJ,
W1}. The $F_1$ function has an integral representation in terms of a
single integral \cite[9.3(4)]{B2}:
$$F_1(a;b,b';c;x,y)=\f{\G c}{\G a\G{c-a}}\int_0^1u^{a-1}(1-u)^{c-a-1}(1-ux)^{-b}(1-uy)^{-b'}du,$$ where $0< Re(a)< Re(c).$
Motivated by Greene \cite{GJ} we attempt to give a finite field
analogue of $F_1(a;b,b';c;x,y)$. Although the other three series
have integral representations with a single integral (involving
products of $_2F_1$'s) and simple double integral representations,
we cannot give simple and beautiful analogues of them easily.

\begin{df}\label{df1.1} For any multiplicative characters $A,B,B',C\in\Fq$ and any $x,y\in\Bbb
F_q,$ we define
$$\Fa_1(A;B,B';C;x,y)=\varepsilon(xy)AC(-1)\sum\limits_uA(u)\overline{A}C(1-u)\overline{B}(1-ux)\overline{B'}(1-uy).$$
\end{df}
In this definition we have dropped the constant $\G c/\G a\G{c-a}$
in order to obtain simpler results. The factor
$\varepsilon(xy)AC(-1)$ is chosen so as to lead to a better
expression in terms of binomial coefficients. In the following
theorem we give an another representation for $\Bbb
F_1(A;B,B';C;x,y)$ involving binomial coefficients.
\begin{thm}\label{thm1.3} For characters $A,B,B',C \in \Fq$ and $x,y\in\Bbb F_q$ we have
$$\Fa_1(A;B,B';C;x,y)=\f1{(q-1)^2}\sum_{\chi,\lambda}\b{A\chi\lambda}{C\chi\lambda}\b{B\chi}\chi\b{B'\lambda}
\lambda\chi(x)\lambda(y).$$
\end{thm}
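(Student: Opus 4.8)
The plan is to expand the definition of $\Fa_1$, substitute the character-sum expansions of the three factors $\overline{A}C(1-u)$, $\overline{B}(1-ux)$, $\overline{B'}(1-uy)$, interchange the order of summation, and isolate the inner sum over $u$ as a Jacobi-sum type object. The basic tool is the binomial-type expansion that underlies Greene's Theorem~\ref{thm1.1}: for a character $D$ and $z\in\Bbb F_q$ one has, by orthogonality of characters and the definition of $\b{}{}$, an identity of the shape
$$\overline{D}(1-z)=\f{1}{q-1}\sum_{\psi}\b{D\psi}{\psi}\psi(z)\q+\q(\text{correction terms supported at }z=0,1),$$
and similarly $D(1-z)=\f{1}{q-1}\sum_\psi\b{D\psi}{\psi}\overline{D}D\cdots$; the cleanest route is to mimic exactly the manipulation Greene uses to pass from the integral representation of $_2\Bbb F_1$ to Theorem~\ref{thm1.1}, since $\Fa_1$ is built to be the ``$F_1$-analogue'' of that same representation with one extra factor of the same type.

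Concretely, first I would write
$$\Fa_1(A;B,B';C;x,y)=\varepsilon(xy)AC(-1)\sum_u A(u)\,\overline{A}C(1-u)\,\overline{B}(1-ux)\,\overline{B'}(1-uy),$$
and apply the expansion to $\overline B(1-ux)$ and $\overline{B'}(1-uy)$ to introduce two summation characters $\chi$ and $\lambda$, producing factors $\b{B\chi}{\chi}$, $\b{B'\lambda}{\lambda}$, and $\chi(x)\chi(u)$, $\lambda(y)\lambda(u)$. The sum over $u$ then becomes $\sum_u A\chi\lambda(u)\,\overline{A}C(1-u)$, which is (up to a normalization) a Jacobi sum $J(A\chi\lambda,\overline{A}C)$ — and after including the global factor $AC(-1)$ and using $\b{E}{F}=F(-1)J(E,\overline F)$, this collapses to $\b{A\chi\lambda}{C\chi\lambda}$, which is exactly the remaining binomial coefficient in the claimed formula. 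Dividing the two expansions each contributed a $1/(q-1)$, giving the overall $1/(q-1)^2$.

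The main obstacle, as always with these finite-field hypergeometric manipulations, is bookkeeping the degenerate terms: the character expansion of $\overline D(1-z)$ is not literally a clean sum over all $\psi$ — it fails at $z=0$ and $z=1$, and one must track the contributions of the trivial character and of the points $u=0,1,1/x,1/y$ carefully, using the convention $\chi(0)=0$ and the $\varepsilon(xy)$ factor to kill the $x=0$ or $y=0$ cases. I expect the cleanest presentation is to first treat $xy\neq0$ (so $\varepsilon(xy)=1$) and verify that all boundary terms either vanish or reassemble correctly — exactly as in Greene's proof of Theorem~\ref{thm1.1}, which I would cite for the one-variable version and then apply twice — and finally note both sides vanish when $xy=0$. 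No genuinely new idea beyond Greene's is needed; the argument is a two-fold iteration of his, and the only real work is checking that the extra factor introduces no new degeneracies.
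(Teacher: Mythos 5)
Your proposal is correct and follows essentially the same route as the paper: expand $\overline B(1-ux)$ and $\overline{B'}(1-uy)$ by the finite-field binomial theorem (the $\delta$-correction terms being killed by $\varepsilon(xy)A(u)$), interchange summation, recognize $\sum_u A\chi\lambda(u)\overline AC(1-u)$ as the Jacobi sum $J(A\chi\lambda,\overline AC)$, and convert it via $\b EF=F(-1)J(E,\overline F)$ and $\b AB=\b A{A\overline B}$ into $\b{A\chi\lambda}{C\chi\lambda}$. The only minor imprecision is that the binomial theorem's correction term is supported only at $z=0$ (not also at $z=1$), but this does not affect the argument.
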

From Definition \ref{df1.1} we get the following corollary immediately.
\begin{cor}\label{cor1.1} For characters $A,B,B',C \in \Fq$ and $x,y\in\Bbb F_q$ we
obtain
\begin{equation}\label{eq1.3}\Fa_1(A;B,B';C;x,y)=\Fa_1(A;B',B;C;y,x),\end{equation}
\begin{equation}\label{eq3.3}\Bbb F_1(A;B,B';C;x,x)=\ _2\Bbb F_1\left[\begin{array}{rr}BB',&A
\\&C\end{array};x\right]\end{equation}
 and
\begin{equation}\label{eq1.5}\Bbb F_1(A;B,B';C;x,1)=B'(-1)\ _2\Bbb
F_1\left[\begin{array}{rr}B,&A\\&\overline{B'}C\end{array};x\right].\end{equation}
\end{cor}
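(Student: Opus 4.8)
The three identities are all read off Definition \ref{df1.1} by either symmetrising or specialising the defining character sum, so the plan is simply to unwind that definition and match it against Greene's formula for ${}_2\Bbb F_1$. Recall that Greene's defining formula together with the normalisation (\ref{eq1.3}) gives
$${}_2\Bbb F_1\left[\begin{array}{rr}A,&B\\&C\end{array};x\right]=\varepsilon(x)\,BC(-1)\sum_u B(u)\,\overline B C(1-u)\,\overline A(1-ux),$$
and that the only elementary facts I will need about the extended characters are $\chi_1(w)\chi_2(w)=(\chi_1\chi_2)(w)$ for every $w\in\Bbb F_q$ (both sides vanish when $w=0$ under the convention $\chi(0)=0$) and $\varepsilon(xy)=\varepsilon(x)\varepsilon(y)$.

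For the first identity, $\Fa_1(A;B,B';C;x,y)=\Fa_1(A;B',B;C;y,x)$, I would note that interchanging the pairs $(B,x)$ and $(B',y)$ in Definition \ref{df1.1} leaves the prefactor $\varepsilon(xy)AC(-1)$ and the factor $A(u)\,\overline AC(1-u)$ untouched and merely swaps $\overline B(1-ux)$ with $\overline{B'}(1-uy)$ inside the sum over $u$; the claim is then immediate. (Alternatively it falls out of Theorem \ref{thm1.3} by renaming the summation characters $\chi\leftrightarrow\lambda$, since $\b{A\chi\lambda}{C\chi\lambda}$ is symmetric in $\chi,\lambda$.)

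For (\ref{eq3.3}) I would set $y=x$ in Definition \ref{df1.1}. Then $\varepsilon(x^2)=\varepsilon(x)$ and $\overline B(1-ux)\,\overline{B'}(1-ux)=\overline{BB'}(1-ux)$, so
$$\Fa_1(A;B,B';C;x,x)=\varepsilon(x)\,AC(-1)\sum_u A(u)\,\overline AC(1-u)\,\overline{BB'}(1-ux),$$
which is exactly Greene's formula for ${}_2\Bbb F_1\left[\begin{array}{rr}BB',&A\\&C\end{array};x\right]$. For (\ref{eq1.5}) I would set $y=1$; since $\overline AC(1-u)\,\overline{B'}(1-u)=\overline A\,\overline{B'}C(1-u)$, Definition \ref{df1.1} becomes
$$\Fa_1(A;B,B';C;x,1)=\varepsilon(x)\,AC(-1)\sum_u A(u)\,\overline A\,\overline{B'}C(1-u)\,\overline B(1-ux).$$
On the other hand Greene's formula, taken with upper parameters $B,A$ and lower parameter $\overline{B'}C$, gives ${}_2\Bbb F_1\left[\begin{array}{rr}B,&A\\&\overline{B'}C\end{array};x\right]=\varepsilon(x)\,A\overline{B'}C(-1)\sum_u A(u)\,\overline A\,\overline{B'}C(1-u)\,\overline B(1-ux)$; multiplying this by $B'(-1)$ and using $B'(-1)\overline{B'}(-1)=\varepsilon(-1)=1$ turns the constant $A\overline{B'}C(-1)$ into $AC(-1)$, so the right side becomes the previous display. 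This proves (\ref{eq1.5}).

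No genuine obstacle is expected here — which is why the statement is labelled an immediate corollary. The entire content is careful bookkeeping, in particular of the constant characters evaluated at $-1$ in (\ref{eq1.5}), and of the fact that the multiplicativity identity $\chi_1(w)\chi_2(w)=(\chi_1\chi_2)(w)$ is being applied legitimately at $w=0$ (where it reads $0=0$), so that no term from $u=1$ requires separate treatment.
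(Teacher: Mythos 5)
Your proposal is correct and is exactly the argument the paper intends: the corollary is stated as following ``immediately'' from Definition \ref{df1.1}, and your computation --- swapping $(B,x)\leftrightarrow(B',y)$ for \eqref{eq1.3}, merging $\overline B(1-ux)\overline{B'}(1-ux)=\overline{BB'}(1-ux)$ for \eqref{eq3.3}, and absorbing $\overline{B'}(1-u)$ into the lower parameter together with the bookkeeping $B'(-1)\,A\overline{B'}C(-1)=AC(-1)$ for \eqref{eq1.5} --- is precisely that unwinding. Your attention to the convention $\chi(0)=0$ (so that multiplicativity holds trivially at $0$ and no $u$ needs separate treatment) is the only delicate point, and you handle it correctly.
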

%Combining the definition of Appell series and  Theorem \ref{thm1.1} we attempt to define
%$$\Bbb F_2(A;B,B';C,C';x,y)=\sum_{\chi,\lambda}\b{A\chi}\chi\b{A\chi\lambda}\lambda\b{B\chi}{C\chi}\b{B'\lambda}{C'\lambda}\chi(x)\lambda(y),
%$$
%$$\Bbb F_3(A,A';B,B';C;x,y)=\sum_{\chi,\lambda}\b{A\chi}\chi\b{B\chi}{C\chi}\b{A'\lambda}\lambda\b{B'\lambda}{C\chi\lambda}\chi(x)
%\lambda(y)$$ and
%$$\Bbb F_4(A;B;C,C';x,y)=\sum_{\chi,\lambda}\b{A\chi}\chi\b{B\chi}{C\chi}\b{A\chi\lambda}\lambda
%\b{B\chi\lambda}{C'\lambda}\chi(x)\lambda(y).$$

The main goal of this paper is to give some transformation and
reduction formulas and generation functions for
$\Fa_1(A;B,B';C;x,y).$ Several examples of such formulas are
\begin{equation*}\Fa_1(A;B,B';C;x,y)=C(-1)\overline B(1-x)\overline{B'}(1-y)\Fa_1
\left(\overline AC;B,B';C;\f x{x-1},\f y{y-1}\right),\end{equation*}
\begin{align*}\varepsilon(x-y)\Bbb F_1(A;B,B';BB';x,y)&=\varepsilon(xy)\overline A(1-x)\ _2\Bbb
F_1\left[\begin{array}{rr}B',&A\\&BB'\end{array};\f{y-x}{1-x}\right]\\&\q
-\varepsilon(y-x)\overline B(-x)\overline {B'}(-y)\end{align*} and
\begin{equation*}
\begin{split}&\f1{q-1}\sum_{\theta}\b{A\overline C\theta}\theta\Bbb F_1(A\theta;B,B';C;x,y)\theta(t)\\&=\varepsilon(t)
\overline A(1-t)\Bbb F_1\left(A;B,B';C;\f x{1-t},\f
y{1-t}\right)-\varepsilon(xy)\overline AC(-t) \overline
B(1-x)\overline{B'}(1-y)
\end{split}\end{equation*}
as given in Theorem 3.2, Corollary 3.2 and Theorem 4.1,
respectively.

 The rest of this paper is organized  as follows. Section 2 is
devoted to the proof of Theorem \ref{thm1.3}. In Section 3 we give
several transformation and reduction formulas for $\Bbb
F_1(A;B,B';C;x,y).$ Section 4 is devoted to the generating functions
for  $\Bbb F_1(A;B,B';C;x,y)$.

\section{Proof of Theorem \ref{thm1.3}}

\q First we list the following three propositions which will be used frequently without indiction in this paper.
\begin{pro}\label{pro2.1}{\rm{\cite[(2.6) and (2.15)]{GJ}}} If $A,B,C\in\Fq$ then
$$\b AB=\b A{A\overline B},$$
$$\b CA\b AB=\b CB\b{C\overline B}{A\overline B}-(q-1)(B(-1)\delta(A)-AB(-1)\delta(B\overline C)).$$
\end{pro}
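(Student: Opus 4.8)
The plan is to derive both identities directly from standard properties of Jacobi and Gauss sums, using only the definition $\b AB=B(-1)J(A,\overline B)$, where $J(\chi,\psi)=\sum_{t}\chi(t)\psi(1-t)$. For the first identity I would apply inside $J(A,\overline B)$ the involution $t\mapsto u=t/(t-1)$ of $\Bbb F_q\setminus\{1\}$; since the characters vanish at $0$ the terms $t=0,1$ are harmless. Writing $t=u/(u-1)$ one gets $A(t)=A(u)\overline A(u-1)$ and $1-t=-1/(u-1)$, so $\overline B(1-t)=B(-1)B(u-1)$, and after collecting the factors at $-1$ the summand becomes $A(-1)A(u)\overline{A\overline B}(1-u)$. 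Summing yields $J(A,\overline B)=A(-1)J(A,\overline{A\overline B})$. Multiplying by $B(-1)$ and comparing with $\b A{A\overline B}=(A\overline B)(-1)J(A,\overline{A\overline B})=A(-1)B(-1)J(A,\overline{A\overline B})$ gives $\b AB=\b A{A\overline B}$.

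For the product identity I would first treat the generic case in which every character occurring as an argument of a Jacobi sum is nontrivial, i.e.\ $A,B,C,A\overline B,C\overline A,C\overline B\neq\varepsilon$. Here I rewrite each of the four binomial coefficients through $J(\chi,\psi)=g(\chi)g(\psi)/g(\chi\psi)$, with $g(\chi)=\sum_t\chi(t)\zeta_p^{\mathrm{Tr}(t)}$ the Gauss sum, and use $g(\chi)g(\overline\chi)=\chi(-1)q$. A direct computation shows that $\b CA\b AB$ and $\b CB\b{C\overline B}{A\overline B}$ both collapse to $B(-1)q\,g(C)g(\overline B)/\bigl(g(C\overline A)g(A\overline B)\bigr)$; since every $\delta$-symbol vanishes on this range, this is precisely the claimed identity.

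The main obstacle, and where essentially all the remaining effort lies, is the bookkeeping of the degenerate cases in which one of the above characters is trivial and the Gauss-sum factorization is no longer available. For these I would compute each side by hand from the special values $J(\varepsilon,\varepsilon)=q-2$, $J(\chi,\varepsilon)=J(\varepsilon,\chi)=-1$ and $J(\chi,\overline\chi)=-\chi(-1)$ for $\chi\neq\varepsilon$. The qualitative point to verify is that the cases $C=\varepsilon$, $A=B$ and $C=A$ reproduce the generic formula with no correction, while the two terms of size $q-1$ come exactly from $A=\varepsilon$ and from $B=C$: when $A=\varepsilon$ one finds $\b CB\b{C\overline B}{A\overline B}=B(-1)q$ whereas $\b CA\b AB=B(-1)$, which is absorbed by the term $-(q-1)B(-1)\delta(A)$; when $B=C$ the analogous discrepancy is absorbed by $+(q-1)AB(-1)\delta(B\overline C)$; and in the overlap $A=\varepsilon=B\overline C$ the two corrections cancel, matching the right-hand side. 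Combining these subcases with the generic computation establishes the formula.
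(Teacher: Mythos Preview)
The paper does not prove this proposition at all: it is quoted verbatim from Greene \cite[(2.6) and (2.15)]{GJ} and used as a black box. So there is no ``paper's own proof'' to compare against.

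Your argument is nonetheless correct. The substitution $t\mapsto t/(t-1)$ gives the reflection $\b AB=\b A{A\overline B}$ cleanly, and in the generic range (all six characters $A,B,C,A\overline B,C\overline A,C\overline B$ nontrivial) the Gauss-sum factorization $J(\chi,\psi)=g(\chi)g(\psi)/g(\chi\psi)$ together with $g(\chi)g(\overline\chi)=\chi(-1)q$ does collapse both $\b CA\b AB$ and $\b CB\b{C\overline B}{A\overline B}$ to the common value $B(-1)q\,g(C)g(\overline B)/\bigl(g(C\overline A)g(A\overline B)\bigr)$. Your diagnosis of the degenerate cases is also right: the borderline cases $C=\varepsilon$, $A=B$, $C=A$ produce no correction, while $A=\varepsilon$ and $B=C$ account exactly for the two $(q-1)$-terms, and their overlap cancels. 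This is essentially how Greene's original proof proceeds as well, so your write-up would serve as a faithful reconstruction of the source the present paper is citing.
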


\begin{pro}\label{pro2.2}{\rm{\cite[(2.10)]{GJ}} (\textbf{Binomial
Theorem})} For character $A$ of $\widehat{\Bbb F_q}$ and $x\in \Bbb
F_q$
$$\overline A(1-x)=\delta(x)+\f1{q-1}\sum_{\chi}\b{A\chi}\chi\chi(x),$$
 where the sum ranges over all multiplicative characters of $\widehat{\Bbb F_q}$ and $\delta$ is the function
 $$\delta(x)=\begin{cases}1\q&\text{if}\q x=0,\\0&\text{if}\q x\neq0.\end{cases}$$
\end{pro}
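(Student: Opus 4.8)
The plan is to unwind the right-hand side into a double sum over $\Bbb F_q$ and then collapse the character sum by orthogonality. Recall from the paper that $\b{A\chi}\chi=\chi(-1)J(A\chi,\overline\chi)$, and that the Jacobi sum expands as $J(A\chi,\overline\chi)=\sum_u A(u)\chi(u)\overline\chi(1-u)$. Substituting this into the right-hand side and interchanging the two finite summations, I would write
$$\f1{q-1}\sum_\chi\b{A\chi}\chi\chi(x)=\f1{q-1}\sum_u A(u)\sum_\chi\chi(-1)\chi(x)\chi(u)\overline\chi(1-u).$$
For each fixed $u$ with $u\neq1$ the inner sum collapses to $\sum_\chi\chi\!\left(\f{-ux}{1-u}\right)$, a complete character sum evaluated at a single fixed field element.

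The engine of the proof is the orthogonality relation $\sum_\chi\chi(t)=(q-1)\delta(1-t)$, valid for every $t\in\Bbb F_q$ once one uses the convention $\chi(0)=0$ (so that $t=0$ contributes nothing and $t=1$ contributes $q-1$). Applying this with $t=\f{-ux}{1-u}$, the inner sum is nonzero precisely when $\f{-ux}{1-u}=1$, i.e. when $u(1-x)=1$. Assuming first $x\neq0,1$, this equation has the unique solution $u=\f1{1-x}$, which automatically lies in $\fq$ and is different from $1$, so the degenerate indices $u=0$ (where $A(u)=0$) and $u=1$ (where $\overline\chi(1-u)=0$) never contribute. The surviving term then gives $A\!\left(\f1{1-x}\right)=\overline A(1-x)$, and since $\delta(x)=0$ in this range this matches the left-hand side.

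It remains to treat the two boundary values. When $x=0$ every $\chi(x)=\chi(0)=0$, so the character sum vanishes identically and the right-hand side reduces to $\delta(0)=1=\overline A(1)$, which is the correct value of the left-hand side; this is exactly the role played by the $\delta(x)$ term. When $x=1$ the equation $\f{-u}{1-u}=1$ has no solution in $\fq$, so the inner sum is $0$ for every $u$, the character sum vanishes, and both sides equal $\overline A(0)=0$.

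I expect the difficulty here to be bookkeeping rather than depth. The main things to keep straight are: maintaining the convention $\chi(0)=0$ consistently so that the orthogonality relation holds in the clean form stated above; confirming that the degenerate indices $u\in\{0,1\}$ are never the solution of $u(1-x)=1$; and verifying that the extra $\delta(x)$ summand is precisely what repairs the $x=0$ case, where the bare character sum would yield $0$ in place of $\overline A(1)=1$. Because the computation is uniform in $A$, no separate treatment of the trivial character $\varepsilon$ is needed beyond respecting the $\chi(0)=0$ convention throughout.
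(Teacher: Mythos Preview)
Your argument is correct. Note, however, that the paper does not actually supply a proof of Proposition~\ref{pro2.2}: it is simply quoted from Greene \cite[(2.10)]{GJ} as background, alongside Propositions~\ref{pro2.1} and~\ref{pro2.3}, with no derivation given. So there is no ``paper's own proof'' to compare against here.

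That said, what you have written is essentially the canonical proof: expand $\b{A\chi}\chi$ via the Jacobi sum, swap the order of summation, and collapse the inner character sum using the orthogonality relation of Proposition~\ref{pro2.3}. Your bookkeeping on the degenerate indices $u\in\{0,1\}$ and the boundary cases $x\in\{0,1\}$ is accurate; in particular, the observation that the solution $u=\tfrac1{1-x}$ never coincides with $0$ or $1$ when $x\notin\{0,1\}$ is exactly what makes the case split clean, and the $\delta(x)$ term is indeed precisely the patch needed at $x=0$ because every $\chi(0)$ vanishes under the paper's convention.
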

\begin{pro}\label{pro2.3}{\rm{\cite[p. 89-90]{KM}}(\textbf{Orthogonal Relations})} If $\chi\in\Fq$ and $t\in\Bbb F_q$  then
$$\sum_\chi\chi(t)=(q-1)\delta(t-1)$$and$$\sum_t\chi(t)=(q-1)\delta(\chi),$$
where, for characters, we define $\delta(\varepsilon)=1$ and $\delta(\chi)=0$ for all $\chi\neq
\varepsilon.$
\end{pro}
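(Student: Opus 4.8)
The plan is to reduce both orthogonality relations to the elementary evaluation of a finite geometric sum, exploiting that $\Bbb F_q^*$ is cyclic. First I would fix a generator $g$ of $\Bbb F_q^*$, which has order $q-1$, and a primitive $(q-1)$-th root of unity $\omega\in\Bbb C$. Since $g^{q-1}=1$, any character $\chi$ satisfies $\chi(g)^{q-1}=1$, so $\chi$ is determined by the $(q-1)$-th root of unity $\chi(g)$; conversely each of the $q-1$ choices gives a distinct character. Hence the characters are exactly $\chi_0=\varepsilon,\chi_1,\dots,\chi_{q-2}$, where $\chi_k(g)=\omega^k$, so that $\Fq$ is itself cyclic of order $q-1$ and $\chi_k(g^j)=\omega^{kj}$. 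This bookkeeping turns every character sum into a sum of roots of unity.

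For the first relation I would fix $t\in\Bbb F_q$ and split into cases. If $t=0$, every character vanishes at $0$ by the convention $\chi(0)=0$, so $\sum_\chi\chi(0)=0$, which matches $(q-1)\delta(-1)=0$. If $t\neq0$, write $t=g^j$; then $\sum_{\chi}\chi(t)=\sum_{k=0}^{q-2}\omega^{kj}$, a geometric series with ratio $\omega^j$. When $t=1$ we have $j\equiv0\pmod{q-1}$, every term equals $1$, and the sum is $q-1=(q-1)\delta(0)$. When $t\neq1$ we have $\omega^j\neq1$, so the series sums to $(\omega^{j(q-1)}-1)/(\omega^j-1)=0$ because $\omega^{q-1}=1$. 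This establishes $\sum_\chi\chi(t)=(q-1)\delta(t-1)$ in all cases.

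For the second relation I would fix $\chi=\chi_k\in\Fq$. Because $\chi(0)=0$, the sum over $t\in\Bbb F_q$ equals the sum over $t\in\Bbb F_q^*$, and writing $t=g^j$ gives $\sum_t\chi(t)=\sum_{j=0}^{q-2}\omega^{kj}$. This is the same geometric series, now with ratio $\omega^k$: it equals $q-1$ precisely when $k=0$, i.e. $\chi=\varepsilon$, and $0$ otherwise, which is exactly $(q-1)\delta(\chi)$ under the stated convention $\delta(\varepsilon)=1$.

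The only genuinely delicate points are conventional rather than computational: one must track the extension $\chi(0)=0$ carefully, so that $t=0$ contributes nothing in the first relation and the sum over $\Bbb F_q$ collapses to a sum over $\Bbb F_q^*$ in the second, and one must keep separate the two meanings of $\delta$, namely the indicator $\delta(t-1)$ on field elements and the indicator $\delta(\chi)$ on characters. An alternative to the explicit root-of-unity evaluation is the shift trick: for $t\neq1$ choose $\psi\in\Fq$ with $\psi(t)\neq1$ (take $\psi=\chi_1$, so $\psi(t)=\omega^j\neq1$), and then reindexing $\chi\mapsto\psi\chi$ gives $\psi(t)\sum_\chi\chi(t)=\sum_\chi\chi(t)$, forcing the sum to vanish; a symmetric argument using some $a$ with $\chi(a)\neq1$ handles the second relation. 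I would present the geometric-series version as the main line, since it is entirely self-contained, and mention the shift trick only as a remark.
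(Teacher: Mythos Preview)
Your argument is correct and is essentially the standard one: parametrize both $\Bbb F_q^*$ and $\Fq$ by a cyclic group of order $q-1$ via a generator $g$ and a primitive root of unity $\omega$, and reduce each sum to a finite geometric series; the case checks at $t=0$ and for the trivial character are handled cleanly by the convention $\chi(0)=0$.

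There is nothing in the paper to compare against: Proposition~\ref{pro2.3} is quoted from \cite[p.~89--90]{KM} without proof, and the argument you give is exactly the one found there (Ireland--Rosen prove the second identity directly by the geometric-series/shift-trick method and obtain the first by the duality between a finite abelian group and its character group). Your write-up is self-contained and would serve perfectly well if a proof were to be included.
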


\begin{proof}[Proof of Theorem \ref{thm1.3}:] From the binomial theorem
over finite fields we know that
$$\overline{B}(1-ux)=\delta(ux)+\f1{q-1}\sum_{\chi}\b{B\chi}\chi\chi(ux).$$
Since $\varepsilon(x)A(u)\delta(ux)=0$ and
$\varepsilon(y)A(u)\delta(uy)=0$ for all $x,y\in\Bbb F_q$ then
\begin{align*}&\Fa_1(A;B,B';C;x,y)\\&=\varepsilon(xy)AC(-1)\sum_{u}A(u)\overline{A}C(1-u)\big(\delta(ux)+
\f1{q-1}\sum_{\chi}\b{B\chi}\chi\chi(ux)\big)\\&\q\times\big(\delta(uy)+\f1{q-1}
\sum_{\lambda}\b{B'\lambda}\lambda\lambda(uy)\big)\\&=\f{AC(-1)}{(q-1)^2}\sum_{u,\chi,\lambda}A\chi\lambda(u)\overline{A}C(1-u)\b{B\chi}
\chi\b{B'\lambda}\lambda\chi(x)\lambda(y)\\&
=\f{AC(-1)}{(q-1)^2}\sum_{\chi,\lambda}J(A\chi\lambda,\overline{A}C)\b{B\chi}\chi\b{B'\lambda}\lambda\chi(x)\lambda(y)\\&=
\f1{(q-1)^2}\sum_{\chi,\lambda} \b{A\chi\lambda}{A\overline
C}\b{B\chi}\chi\b{B'\lambda}\lambda\chi(x)\lambda(y)\\&=
\f1{(q-1)^2}\sum_{\chi,\lambda}\b{A\chi\lambda}{C\chi\lambda}\b{B\chi}\chi\b{B'\lambda}
\lambda\chi(x)\lambda(y).
\end{align*} This completes the proof.\end{proof}

 \section{Transformation and reduction formulas for $\Fa_1$}
 \q As we know, only some special hypergeometric series have summation formulas. So studying their transformation and
 reduction formulas is becoming more important and it is also true over finite fields. In this section we obtain
 some transformation and reduction formulas for $\Bbb F_1(A;B,B';C;x,y).$

 Just by some simple calculation we will obtain several reduction formulas immediately, which can be considered as
 the finite field analogues of
$$F_1(a;b,0;c;x,y)=\
_2F_1\left[\begin{array}{rr}a,&b\\&c\end{array};x\right]$$ and
$$F_1(a;0,b';c;x,y)=\ _2F_1\left[\begin{array}{rr}a,&b'\\&c\end{array};y\right],$$
respectively.

 \begin{thm}\label{thm3.1}For $A,B,B',C\in\Fq$ and $x,y\in\Bbb F_q$ we have
\begin{equation}\label{eq3.1}\Bbb F_1(A;B,\varepsilon;C;x,y)=\varepsilon(y)\ _2\Bbb F_1
\left[\begin{array}{rr}B,&A\\&C\end{array};x\right]-\varepsilon(x)\overline
AC(1-y)
 B\overline C(y)\overline B(y-x),
\end{equation}
\begin{equation}\label{eq3.2}
\Bbb F_1(A;\varepsilon,B';C;x,y)=\varepsilon(x)\ _2\Bbb F_1\left[\begin{array}{rr}B',&A\\&C
\end{array};y\right]-\varepsilon(y)\overline AC(1-x)B'\overline C(x)\overline {B'}
(x-y).\end{equation}

\end{thm}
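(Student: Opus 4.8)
The plan is to prove \eqref{eq3.1} directly from Definition \ref{df1.1}, and then deduce \eqref{eq3.2} by symmetry. First I would substitute $B'=\varepsilon$ into the defining sum, giving
\[
\Fa_1(A;B,\varepsilon;C;x,y)=\varepsilon(xy)AC(-1)\sum_u A(u)\,\overline A C(1-u)\,\overline B(1-ux)\,\overline\varepsilon(1-uy).
\]
Since $\overline\varepsilon=\varepsilon$, the factor $\varepsilon(1-uy)$ is $1$ unless $uy=1$, i.e.\ unless $u=y^{-1}$ (assuming $y\neq0$, which is forced by the $\varepsilon(xy)$ prefactor). So $\varepsilon(1-uy)=1-\delta(1-uy)$, and the sum splits as the full sum over $u$ minus the single term at $u=y^{-1}$. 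The full sum over $u$ is exactly $\varepsilon(x)\cdot{}_2\Bbb F_1\left[\begin{array}{rr}B,&A\\&C\end{array};x\right]^G\cdot q = \varepsilon(x)\cdot{}_2\Bbb F_1\left[\begin{array}{rr}B,&A\\&C\end{array};x\right]$ after matching it against Greene's integral representation (with the roles of the numerator parameters arranged so that $A$ plays the role of the ``$\overline A(1-ux)$'' slot and $B$ the ``$B(u)$'' slot), using \eqref{eq1.3}; care is needed to get the $\varepsilon$ and sign conventions right, but it is the exact shape of the ${}_2\Bbb F_1$ integral.

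For the subtracted term, evaluate the summand at $u=y^{-1}$: we get $\varepsilon(xy)AC(-1)\cdot A(y^{-1})\,\overline A C(1-y^{-1})\,\overline B(1-xy^{-1})$. Now $1-y^{-1}=(y-1)/y$ and $1-xy^{-1}=(y-x)/y$, so $\overline A C(1-y^{-1})=\overline A C(y-1)\,\overline{\overline A C}(y)=\overline A C(y-1)\,A\overline C(y)$ and $\overline B(1-xy^{-1})=\overline B(y-x)\,B(y)$. Collecting the powers of $y$: $A(y^{-1})\cdot A\overline C(y)\cdot B(y)=B\overline C(y)$ (the $\varepsilon(xy)$ already ensures $y\neq0$ so $A(y^{-1})A(y)=\varepsilon(y)=1$ on the support). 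Also $AC(-1)\cdot\overline A C(y-1)=AC(-1)\overline A C(-(1-y))=AC(-1)\overline A C(-1)\overline A C(1-y)=C(-1)^2\overline A C(1-y)=\overline A C(1-y)$. Finally the prefactor $\varepsilon(xy)$ collapses to $\varepsilon(x)$ since $y\neq 0$ on the support of this term. Assembling, the subtracted term is $\varepsilon(x)\,\overline A C(1-y)\,B\overline C(y)\,\overline B(y-x)$, which matches the second term on the right-hand side of \eqref{eq3.1}. Having established \eqref{eq3.1}, I would obtain \eqref{eq3.2} by applying the symmetry \eqref{eq1.3} of Corollary \ref{cor1.1} (namely $\Fa_1(A;B,B';C;x,y)=\Fa_1(A;B',B;C;y,x)$) with $B=\varepsilon$, which swaps $x\leftrightarrow y$ and the two lower parameters throughout \eqref{eq3.1}.

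The main obstacle I anticipate is purely bookkeeping: pinning down exactly which ${}_2\Bbb F_1$ (with which permutation of parameters and which $\varepsilon$-factor) the full $u$-sum equals, i.e.\ correctly reading off Greene's integral representation and the normalization \eqref{eq1.3}. There is also a subtlety in handling the degenerate characters (e.g.\ the $\delta$-terms that Greene's binomial theorem and the orthogonality relations produce if one goes through Theorem \ref{thm1.3} instead of the integral form); I would avoid this by working directly with the integral representation in Definition \ref{df1.1} rather than with the binomial-coefficient series, so that no $\delta$-corrections arise beyond the single explicit point $u=y^{-1}$ extracted above.
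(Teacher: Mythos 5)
Your proposal is correct and follows essentially the same route as the paper: prove \eqref{eq3.1} by writing $\varepsilon(1-uy)$ as the full sum minus the single term at $u=1/y$, identify the full sum with $_2\Bbb F_1\left[\begin{array}{rr}B,&A\\&C\end{array};x\right]$ directly from Greene's integral definition, simplify the subtracted term exactly as you do, and then deduce \eqref{eq3.2} from the symmetry $\Fa_1(A;B,B';C;x,y)=\Fa_1(A;B',B;C;y,x)$. The only cosmetic difference is that the paper disposes of the degenerate case by noting the identity "obviously holds when $y=0$" rather than invoking the $\varepsilon(xy)$ prefactor, but the content is identical.
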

\begin{proof} First we  prove (\ref{eq3.1}).
Obviously, (\ref{eq3.1}) holds when $y=0$. For $y\neq0$ we have
\begin{align*}
&\Bbb
F_1(A;B,\varepsilon;C;x,y)=\varepsilon(x)AC(-1)\sum_uA(u)\overline
AC(1-u)\overline B(1-ux)\varepsilon(1-uy)\\&
=\varepsilon(x)AC(-1)\sum_{u\neq\f1y}A(u)\overline AC(1-u)\overline
B(1-ux)\\&= \varepsilon(x)AC(-1)\Big(\sum_uA(u)\overline
AC(1-u)\overline B(1-ux)-A(1/y)\overline AC(1-1/y)\overline
B(1-x/y)\Big)\\& =\ _2\Bbb
F_1\left[\begin{array}{rr}B,&A\\&C\end{array};x\right]-\varepsilon(x)\overline
AC(1-y) B\overline C(y)\overline B(y-x).
\end{align*}
Combing (\ref{eq1.3}) with (\ref{eq3.1}) we can get (\ref{eq3.2})
immediately. This completes the proof.
\end{proof}

In \cite{MJ} Schlosser stated the following three transformation
formulas for $F_1$.
\begin{equation}\label{eq3.4}F_1(a;b,b';c;x,y)=(1-x)^{-b}(1-y)^{-b'}F_1(c-a;b,b';c;\f x{x-1},\f y{y-1}),\end{equation}
 \begin{equation}\label{eq3.5}F_1(a;b,b';c;x,y)=(1-x)^{-a}F_1\left(a;-b-b'+c,b';c;\f x{x-1},\f{y-x}{1-x}\right)\end{equation}
and
\begin{equation}\label{eq3.6}F_1(a;b,b';c;x,y)=(1-x)^{c-a-b}(1-y)^{-b'}F_1\left(c-a;c-b-b',b';c;x,\f{x-y}{1-y}\right).
\end{equation}

In the following theorem we give finite field analogues of
(\ref{eq3.4})-(\ref{eq3.6}).
\begin{thm}\label{thm3.3} For characters $A,B,B'$ and $C$ of $\Fq$
and $x,y\in\Bbb F_q$ we have
\begin{equation}\label{eq3.4'}\Fa_1(A;B,B';C;x,y)=C(-1)\overline B(1-x)\overline{B'}(1-y)\Fa_1\left(\overline AC;B,B';C;\f x{x-1},\f y{y-1}\right),\end{equation}
\begin{equation}\label{eq3.5'}\varepsilon(x-y)\Bbb F_1(A;B,B';C;x,y)=\varepsilon(y)\overline A(1-x)\Bbb F_1\left(A;\overline B\overline {B'}C,B';C;\f x{x-1},\f{y-x}{1-x}\right),\end{equation}
\begin{equation}\label{eq3.6'}\begin{split}\varepsilon(x-y)\Bbb F_1(A;B,B';C;x,y)&=\varepsilon(y)C(-1)\overline A\overline BC(1-x)\overline{B'}(1-y)
\\&\q\times\Bbb F_1\left(\overline AC;\overline B\overline{B'}C,B';C;x,\f{x-y}{1-y}\right).
\end{split}\end{equation}
\end{thm}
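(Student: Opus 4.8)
The plan is to prove all three identities directly from Definition \ref{df1.1} by a change of variables in the defining character sum, in the style of the proof of \eqref{eq3.1}. For \eqref{eq3.4'}, start from
$$\Fa_1(A;B,B';C;x,y)=\varepsilon(xy)AC(-1)\sum_u A(u)\overline AC(1-u)\overline B(1-ux)\overline{B'}(1-uy),$$
and substitute $u\mapsto 1-u$; using $A(1-u)\overline AC(u)$ in place of $A(u)\overline AC(1-u)$ and rewriting $1-(1-u)x=(1-x)\bigl(1-u\frac{x}{x-1}\bigr)$ (valid since we may assume $x\neq 1$; the case $x=1$ or $y=1$ is handled separately, where both sides reduce via \eqref{eq1.5} or are easily checked to vanish), one pulls out $\overline B(1-x)\overline{B'}(1-y)$ and recognizes the remaining sum as $\Fa_1(\overline AC;B,B';C;\frac{x}{x-1},\frac{y}{y-1})$ up to the correct normalizing factor; the prefactor $\varepsilon(xy)$ is unchanged because $\frac{x}{x-1}=0 \iff x=0$, and the $AC(-1)$ versus $C(-1)$ bookkeeping comes out since $\overline AC(-1)\cdot AC(-1)=\varepsilon(-1)=1$ forces the extra $C(-1)$.

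For \eqref{eq3.5'} and \eqref{eq3.6'} the same idea applies but the substitution is the one that sends the $u$-line to the $u$-line fixing $\{0\}$ and moving the other marked points: for \eqref{eq3.5'} use $u\mapsto \frac{u}{1-(1-u)x}$-type reasoning, i.e. the substitution $v=\frac{u(1-x)}{1-ux}$ (equivalently $u=\frac{v}{1-x+vx}$), which is the finite-field shadow of the Euler-type substitution behind \eqref{eq3.5}. Under it $1-ux\mapsto \frac{1-x}{1-vx\cdot(\text{stuff})}$ and $1-uy$ transforms into a factor times $1-v\frac{y-x}{1-x}$; the character $\overline B(1-ux)$ combined with the Jacobian-type character that appears converts $B$ into $\overline B\overline{B'}C$, matching the first lower parameter on the right. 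The factor $\varepsilon(x-y)$ on the left and $\varepsilon(\frac{y-x}{1-x})$ implicit on the right match because $\frac{y-x}{1-x}=0\iff x=y$ (again assuming $x\neq1$), and this is exactly why the hypothesis forces the $\varepsilon(x-y)$ prefactor rather than $\varepsilon(xy)$. Identity \eqref{eq3.6'} follows either by the analogous substitution or, more cheaply, by composing \eqref{eq3.4'} with \eqref{eq3.5'} together with the symmetry \eqref{eq1.3}; I would present it as a corollary-style deduction: apply \eqref{eq3.5'} after \eqref{eq3.4'}, simplify the nested arguments $\frac{x}{x-1}$ and $\frac{(y/(y-1))-(x/(x-1))}{1-x/(x-1)}=\frac{x-y}{1-y}$, and collect the character factors $\overline B(1-x)$, $\overline{B'}(1-y)$, $\overline A(1-\frac{x}{x-1})=\overline A(\frac{-1}{x-1})$ into the stated $\overline A\overline BC(1-x)$.

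Alternatively — and this may be the cleaner write-up — I would prove all three from Theorem \ref{thm1.3} by manipulating the double character sum: write each right-hand side via Theorem \ref{thm1.3}, expand $\overline B(1-x)$, $\overline{B'}(1-y)$, $\overline A(1-x)$ by the Binomial Theorem (Proposition \ref{pro2.2}), and collapse the resulting sums over the auxiliary characters using Proposition \ref{pro2.1} (the Jacobi-sum identity $\b CA\b AB=\b CB\b{C\overline B}{A\overline B}-\cdots$) and the orthogonality relations (Proposition \ref{pro2.3}); the $\delta$-terms thrown off by the binomial theorem and by Proposition \ref{pro2.1} are precisely what account for the $\varepsilon(x-y)$, $\varepsilon(y)$ prefactors and any correction terms. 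The main obstacle in either approach is the careful handling of the degenerate cases — when one of $x,y$ equals $0$ or $1$, or when $x=y$ — since the change of variables becomes singular exactly there; these are the cases where the $\varepsilon$-factors and the boundary terms (as in the $A(1/y)\overline AC(1-1/y)\overline B(1-x/y)$ term in the proof of Theorem \ref{thm3.1}) do real work, and they must be checked by hand against \eqref{eq3.1}, \eqref{eq3.2}, \eqref{eq3.3}, \eqref{eq1.5}, and the trivial-character conventions $\chi(0)=0$, $\delta(\varepsilon)=1$.
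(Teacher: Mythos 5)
Your plan is correct and follows essentially the same route as the paper: identity \eqref{eq3.4'} via the substitution $u\mapsto 1-u$ in the defining sum, \eqref{eq3.5'} via the M\"obius substitution $u=\f{v(1-x)}{1-vx}$, and \eqref{eq3.6'} either by the analogous substitution (the paper uses $u=\f{1-v}{1-vx}$) or, equivalently, by composing \eqref{eq3.4'} with \eqref{eq3.5'}. One small bookkeeping slip in your composition argument: applying \eqref{eq3.5'} to $\Fa_1(\overline AC;\dots)$ produces the prefactor $A\overline C\left(1-\f x{x-1}\right)=\overline AC(1-x)$, not $\overline A\left(1-\f x{x-1}\right)$; with that correction the factors do combine to the stated $\overline A\,\overline B C(1-x)$.
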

\begin{proof}First we prove (\ref{eq3.4'}). We have
\begin{align*}&C(-1)\overline B(1-x)\overline{B'}(1-y)\Fa_1\left(\overline AC;B,B';C;\f x{x-1},\f y{y-1}\right)\\&
=\varepsilon\left(\f {xy}{(x-1)(y-1)}\right)AC(-1)\overline
B(1-x)\overline{B'}(1-y)\\&\q\times\sum\limits_{u}\overline{A}C(u)A(1-u)\overline{B}\left(1-\f{ux}{x-1}\right)\overline{B'}
\left(1-\f{uy}{y-1}\right)\\&=\varepsilon(
xy)AC(-1)\sum\limits_{u}\overline{A}C(u)A(1-u)\overline{B}\big((u-1)x+1\big)
\overline{B'}\big((u-1)y+1\big)\\&=\varepsilon(
xy)AC(-1)\sum\limits_{v}A(v)\overline{A}C(1-v)\overline{B}(1-vx)\overline{B'}(1-vy)\\&=
\Bbb F_1(A;B,B';C;x,y).
\end{align*}

Now we give the proof of (\ref{eq3.5'}). Obviously, it holds when
$x=0$. For $x\neq0,$ substituting $u=\f {v(1-x)}{1-vx}$ into the
right hand of (\ref{eq3.5'}) we have
\begin{align*}&\varepsilon(y)\overline A(1-x)
\Bbb F_1\left(A;\overline{BB'}C,B';C;\f
x{x-1},\f{y-x}{1-x}\right)\\&
=\varepsilon(xy)\varepsilon(y-x)\overline
A(1-x)AC(-1)\\&\q\times\sum_{u} A(u)\overline AC(1-u)BB'\overline
C\left(1-\f {ux}{x-1}\right)\overline{B'}
\left(1-\f{u(y-x)}{1-x}\right)
\\&=\varepsilon(xy)\varepsilon(y-x)AC(-1)\sum_{v}A(v)\overline AC(1-v)\overline
B(1-vx)\overline {B'}(1-vy)\\&=\varepsilon(y-x)\Bbb
F_1(A;B,B';C;x,y).
\end{align*}
%\begin{align*}&\varepsilon(y)C(-1)\overline A\overline BC(1-x)\overline{B'}(1-y)\Bbb F_1\left(\overline AC;\overline B\overline{B'}C;B';C;x,\f{x-y}{1-y}\right)
%\\&=\varepsilon(y)\varepsilon\left(\f{x^2-xy}{1-y}\right)\overline A\overline BC(1-x)\overline{B'}(1-y)\f{AC(-1)}{q-1}\\&\q\times\sum_{u}\overline AC(u)A(1-u)
%BB'\overline C(1-ux)\overline{B'}\left(1-u\f{x-y}{1-y}\right)
%\\&=\varepsilon(x-y)\varepsilon(y)\f{AC(-1)}{q-1}\sum_{v}A(v)\overline AC(1-v)\overline B(1-vx)\overline {B'}(1-vy)\\&=\varepsilon(x-y)\Bbb F_1(A;B,B';C;x,y).
%\end{align*}
The proof of (3.8) is similar to (3.7), but with the substitution
$u=\f{1-v}{1-vx}$ on the right hand side. This completes the proof.
\end{proof}
Combining Theorem \ref{thm3.3} with \eqref{eq1.3} we obtain the
following theorem.
\begin{thm}\label{thm3.4} For characters $A,B,B',C$ of $\Fq$ and $x,y\in \Bbb
F_q$ then
\begin{equation}\label{eq3.11}\varepsilon(x-y)\Bbb F_1(A;B,B';C;x,y)=\varepsilon(x)\overline A(1-y)\Bbb F_1\left(A;B,\overline {BB'}C;C;\f{x-y}{1-y},\f y{y-1}\right)\end{equation} and
\begin{equation}\label{eq3.12}\begin{split}\varepsilon(x-y)\Bbb F_1(A;B,B';C;x,y)&=\varepsilon(x)C(-1)\overline B(1-x)\overline{AB'}C(1-y)
\\&\q\times\Bbb F_1\left(\overline AC;B,\overline {BB'}C;C;\f{y-x}{1-x},y\right)
\end{split}\end{equation} hold.
\end{thm}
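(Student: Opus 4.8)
The plan is to deduce both identities of Theorem~\ref{thm3.4} directly from Theorem~\ref{thm3.3}, using nothing beyond the symmetry
$$\Fa_1(A;B,B';C;x,y)=\Fa_1(A;B',B;C;y,x)$$
recorded in Corollary~\ref{cor1.1}, equation~\eqref{eq1.3}. Since the transformation formulas \eqref{eq3.5'} and \eqref{eq3.6'} hold for all characters $A,B,B',C$ of $\Fq$ and all $x,y\in\Bbb F_q$, I am free to perform the substitution $(B,B',x,y)\mapsto(B',B,y,x)$ in either of them and then rewrite the two $\Fa_1$'s that appear via the symmetry above, also using the trivial fact that $\varepsilon(-t)=\varepsilon(t)$, so that $\varepsilon(y-x)=\varepsilon(x-y)$.

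To obtain \eqref{eq3.11}, I would start from \eqref{eq3.5'} and apply the substitution $(B,B',x,y)\mapsto(B',B,y,x)$, which yields
$$\varepsilon(y-x)\Fa_1(A;B',B;C;y,x)=\varepsilon(x)\overline A(1-y)\Fa_1\left(A;\overline{BB'}C,B;C;\f y{y-1},\f{x-y}{1-y}\right).$$
Applying \eqref{eq1.3} to the left side replaces $\Fa_1(A;B',B;C;y,x)$ by $\Fa_1(A;B,B';C;x,y)$, and applying it to the right side replaces $\Fa_1\bigl(A;\overline{BB'}C,B;C;\f y{y-1},\f{x-y}{1-y}\bigr)$ by $\Fa_1\bigl(A;B,\overline{BB'}C;C;\f{x-y}{1-y},\f y{y-1}\bigr)$; rewriting $\varepsilon(y-x)$ as $\varepsilon(x-y)$ then gives precisely \eqref{eq3.11}.

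For \eqref{eq3.12} I would repeat the argument starting from \eqref{eq3.6'}. The same substitution $(B,B',x,y)\mapsto(B',B,y,x)$ turns the prefactor $C(-1)\overline A\overline BC(1-x)\overline{B'}(1-y)$ into $C(-1)\overline A\overline{B'}C(1-y)\overline B(1-x)$, which I would regroup as $C(-1)\overline B(1-x)\overline{AB'}C(1-y)$, and it turns $\overline B\overline{B'}C$ into $\overline{B'}\overline BC=\overline{BB'}C$ and the argument $\f{x-y}{1-y}$ into $\f{y-x}{1-x}$; applying \eqref{eq1.3} to both occurrences of $\Fa_1$ and again swapping $\varepsilon(y-x)$ for $\varepsilon(x-y)$ produces \eqref{eq3.12}.

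I do not anticipate any genuine obstacle: the whole proof is symbol manipulation, and the only points requiring a little care are the correct transcription of the arguments $\f y{y-1}$, $\f{x-y}{1-y}$, $\f{y-x}{1-x}$ under the substitution, the bookkeeping of the character prefactors, and checking that the $\Fa_1$'s obtained after the substitution are of exactly the shape to which the symmetry \eqref{eq1.3} applies. No new character-sum identity beyond Theorem~\ref{thm3.3} and Corollary~\ref{cor1.1} is needed.
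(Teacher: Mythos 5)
Your proposal is correct and is essentially the paper's own argument: the paper likewise derives \eqref{eq3.11} by combining the symmetry \eqref{eq1.3} with \eqref{eq3.5'} applied to the swapped data $(B',B;y,x)$, and notes that \eqref{eq3.12} follows in the same way from \eqref{eq3.6'}. Your bookkeeping of the prefactors and arguments checks out.
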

\begin{proof} The proofs of  (\ref{eq3.11}) and (\ref{eq3.12}) are similar so the proof of (\ref{eq3.12}) is skipped.
 We have
\begin{align*}&\varepsilon(x-y)\Bbb F_1(A;B,B';C;x,y)=\varepsilon(x-y)\Bbb F_1(A;B',B;C;y,x)\\&=\varepsilon(x)\overline A(1-y)\Bbb F_1\left(A;\overline{BB'}C,B;C;\f y{y-1},\f{x-y}{1-y}\right)\\&=\varepsilon(x)\overline A(1-y)\Bbb F_1\left(A;B,\overline{BB'}C;C;\f{x-y}{1-y},\f y{y-1}\right).
\end{align*}Then the proof is completed.
\end{proof}
\textbf{Remark 1.} Theorem \ref{thm3.4} gives finite field analogues
of the following two transformation formulas \cite[(27, 29)]{MJ}:
$$F_1(a;b,b';c;x,y)=(1-y)^{-a}F_1\left(a;b,c-b-b';c;\f{x-y}{1-y},\f y{y-1}\right)$$  and
$$F_1(a;b,b';c;x,y)=(1-x)^{-b}(1-y)^{c-a-b'}F_1\left(c-a;b,c-b-b';c;\f{y-x}{1-x},y\right).$$

Putting $B'=\varepsilon$ into (\ref{eq3.4'}) and using Theorem
\ref{thm3.1} we obtain the following analogue of the well-known
Pfaff-Kummer transformation of $_2F_1$:
\begin{equation}\label{eq3.10}_2F_1\left[\begin{array}{rr}a,&b\\&c\end{array};x\right]=(1-x)^{-b}\ _2F_1\left[\begin{array}{rr}c-a,&b\\&c\end{array};\f x{x-1}\right].\end{equation}
\begin{cor}\label{cor3.1}For $x\in\Bbb F_q\setminus\{1\}$ we have
$$\ _2\Bbb F_1\left[\begin{array}{rr}B,&A\\&C\end{array};x\right]=C(-1)\overline
B(1-x)\ _2\Bbb F_1\left[\begin{array}{rr}B,&\overline
AC\\&C\end{array};\f x{x-1}\right].$$
 %and \begin{align*}\varepsilon(y)\ _2\Bbb F_1\left[\begin{array}{rr}B,&A\\&C\end{array};x\right]&=\varepsilon(y)\overline A(1-x)\ _2\Bbb F_1\left[\begin{array}{rr}\overline BC,&B\\&C\end{array};\f x{x-1}\right]\\\q&+\f{\varepsilon(x)}{q-1}\overline AC(y-1)B\overline C(y)\overline B(y-x),\q(x\neq y).\end{align*}
\end{cor}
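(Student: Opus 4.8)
The plan is to derive Corollary~\ref{cor3.1} as a special case of the transformation~(\ref{eq3.4'}) in Theorem~\ref{thm3.3}, combined with the reduction formula~(\ref{eq3.1}) of Theorem~\ref{thm3.1}. The key observation is that setting $B'=\varepsilon$ collapses the Appell series $\Fa_1$ on both sides of (\ref{eq3.4'}) down to ordinary ${}_2\Bbb F_1$'s, via Theorem~\ref{thm3.1}.

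First I would write (\ref{eq3.4'}) with $B'=\varepsilon$:
\begin{equation*}\Fa_1(A;B,\varepsilon;C;x,y)=C(-1)\overline B(1-x)\varepsilon(1-y)\Fa_1\left(\overline AC;B,\varepsilon;C;\f x{x-1},\f y{y-1}\right).\end{equation*}
Now I would apply (\ref{eq3.1}) to each side. On the left,
$$\Fa_1(A;B,\varepsilon;C;x,y)=\varepsilon(y)\ _2\Bbb F_1\left[\begin{array}{rr}B,&A\\&C\end{array};x\right]-\varepsilon(x)\overline AC(1-y)B\overline C(y)\overline B(y-x).$$
On the right, I apply (\ref{eq3.1}) with $A$ replaced by $\overline AC$ (so that $\overline{(\overline AC)}C=A$), $x$ replaced by $x/(x-1)$, and $y$ replaced by $y/(y-1)$:
$$\Fa_1\!\left(\overline AC;B,\varepsilon;C;\tfrac x{x-1},\tfrac y{y-1}\right)=\varepsilon\!\left(\tfrac y{y-1}\right)\ _2\Bbb F_1\!\left[\begin{array}{rr}B,&\overline AC\\&C\end{array};\tfrac x{x-1}\right]-\varepsilon\!\left(\tfrac x{x-1}\right)A(1-\tfrac y{y-1})B\overline C(\tfrac y{y-1})\overline B(\tfrac y{y-1}-\tfrac x{x-1}).$$
Substituting both expansions into the $B'=\varepsilon$ instance of (\ref{eq3.4'}) gives an identity between the two desired ${}_2\Bbb F_1$'s plus two correction terms; I would then verify that the correction terms cancel, i.e. that
$$\varepsilon(x)\overline AC(1-y)B\overline C(y)\overline B(y-x)=C(-1)\overline B(1-x)\varepsilon(x)A\left(1-\tfrac y{y-1}\right)B\overline C\left(\tfrac y{y-1}\right)\overline B\left(\tfrac y{y-1}-\tfrac x{x-1}\right)$$
whenever $x\neq 0$ (the $x=0$ case being trivial since everything vanishes). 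This is a routine multiplicativity check: $1-\tfrac y{y-1}=\tfrac{-1}{y-1}$, $\tfrac y{y-1}=\tfrac y{y-1}$, and $\tfrac y{y-1}-\tfrac x{x-1}=\tfrac{y(x-1)-x(y-1)}{(y-1)(x-1)}=\tfrac{x-y}{(x-1)(y-1)}$, so after collecting the character values the two sides match. Once the correction terms cancel, the surviving equality is exactly
$$\varepsilon(y)\ _2\Bbb F_1\left[\begin{array}{rr}B,&A\\&C\end{array};x\right]=\varepsilon(y)C(-1)\overline B(1-x)\ _2\Bbb F_1\left[\begin{array}{rr}B,&\overline AC\\&C\end{array};\f x{x-1}\right],$$
and dividing by $\varepsilon(y)$ (valid for $y\neq 0$; but neither side depends on $y$, so the restriction is vacuous) yields the claim for all $x\neq 1$.

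The main obstacle will be the bookkeeping in the cancellation of the two correction terms: one must be careful with the convention $\chi(0)=0$ and with the factor $\varepsilon\!\left(\tfrac x{x-1}\right)$, which equals $\varepsilon(x)$ precisely when $x\neq 1$ (and the hypothesis $x\in\Bbb F_q\setminus\{1\}$ is there for exactly this reason — at $x=1$ the argument $x/(x-1)$ is undefined). Everything else is a direct substitution. An alternative, arguably cleaner route would be to prove the corollary directly from Theorem~\ref{thm1.1}: write $\ _2\Bbb F_1[{}^{B,\ A}_{\ \ C};x]=\tfrac1{q-1}\sum_\chi\b{B\chi}\chi\b{A\chi}{C\chi}\chi(x)$, use Proposition~\ref{pro2.2} to expand $\overline B(1-x)$, and use Proposition~\ref{pro2.1} to rewrite $\b{A\chi}{C\chi}=\b{A\chi}{A\overline C}$ and then reindex; but since (\ref{eq3.4'}) and (\ref{eq3.1}) are already available, the first route is shorter and that is the one I would present.
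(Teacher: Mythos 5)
Your proposal is correct and follows exactly the paper's route: the paper derives Corollary \ref{cor3.1} precisely by setting $B'=\varepsilon$ in \eqref{eq3.4'} and invoking Theorem \ref{thm3.1}, which is what you do (you simply carry out the cancellation of the correction terms explicitly, which the paper leaves to the reader). The only minor quibble is that to strip the $\varepsilon$-factors cleanly you need to choose $y\notin\{0,1\}$, not just $y\neq 0$, but this does not affect the validity of the argument.
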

\textbf{Remark 2.} In \cite{GJ} Greene also obtained the analogue of
(\ref{eq3.10}) which is read as
$$_2\Bbb F_1\left[\begin{array}{rr}A,&B\\&C\end{array};x\right]=C(-1)\overline A(1-x)\ _2\Bbb F_1\left[\begin{array}{rr}
A,&C\overline B\\&C\end{array};\f x{x-1}\right]+A(-1)\b B{\overline AC}\delta(1-x)$$
 %and $$_2\Bbb F_1\left[\begin{array}{rr}A,&B\\&C\end{array};x\right]=\overline B(1-x)\ _2\Bbb F_1\left[\begin{array}{rr}C\overline A,&B\\&C\end{array};\f x{x-1}\right]+A(-1)\b B{\overline AC}\delta(1-x).$$
and the above equation extends the domain of Corollary \ref{cor3.1}
to $\Bbb F_q.$
 \begin{cor}\label{cor3.2} For characters $A,B,B'$ of $\Fq$ and $x,y\in F_q$ we have
\begin{align*}\varepsilon(x-y)\Bbb F_1(A;B,B';BB';x,y)&=\varepsilon(xy)\overline A(1-x)\ _2\Bbb
F_1\left[\begin{array}{rr}B',&A\\&BB'\end{array};\f{y-x}{1-x}\right]\\&\q
-\varepsilon(y-x)\overline B(-x)\overline {B'}(-y)\end{align*} and
\begin{align*}
\varepsilon(x-y)&\Bbb
F_1(A;B,B';BB';x,y)=\varepsilon(xy)BB'(-1)\overline AB'(1-x)
\overline{B'}(1-y)\\&\q\times\ _2\Bbb
F_1\left[\begin{array}{rr}B',&\overline ABB'\\&BB'\end{array};
\f{x-y}{1-y}\right]-\varepsilon(x-y)\overline
B(-x)\overline{B'}(-y).\end{align*}
\end{cor}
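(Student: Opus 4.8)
The plan is to obtain both identities by specializing the transformation formulas of Theorem~\ref{thm3.3} to $C=BB'$ and then invoking the reduction formula \eqref{eq3.2} of Theorem~\ref{thm3.1}. The key observation is that with $C=BB'$ the inner $\Fa_1$ appearing on the right of \eqref{eq3.5'} and \eqref{eq3.6'} acquires trivial first lower parameter, since $\overline B\,\overline{B'}C=\varepsilon$; and by \eqref{eq3.2} such an $\Fa_1$ collapses to a ${}_2\Bbb F_1$ plus an explicit product of characters, which is exactly the shape of the asserted right-hand sides.

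For the first formula I would start from \eqref{eq3.5'} with $C=BB'$, namely
$$\varepsilon(x-y)\,\Fa_1(A;B,B';BB';x,y)=\varepsilon(y)\overline A(1-x)\,\Fa_1\left(A;\varepsilon,B';BB';\f{x}{x-1},\f{y-x}{1-x}\right),$$
and then apply \eqref{eq3.2} with $x\mapsto\f{x}{x-1}$, $y\mapsto\f{y-x}{1-x}$ and $C=BB'$. The hypergeometric term of \eqref{eq3.2}, together with $\varepsilon\big(\f{x}{x-1}\big)=\varepsilon(x)$, immediately produces the main term $\varepsilon(xy)\overline A(1-x)\,{}_2\Bbb F_1$ of the claim. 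For the error term I would simplify the arguments via $1-\f{x}{x-1}=\f{-1}{x-1}$, $\f{x}{x-1}-\f{y-x}{1-x}=\f{y}{x-1}$ and $B'\overline{BB'}=\overline B$; then the three $(x-1)$-character factors that appear multiply to $A(x-1)$, which cancels the $\overline A(x-1)$ hidden in the outer $\overline A(1-x)$, while the $(-1)$-factors collapse via $\chi(-1)=\overline\chi(-1)$ and $A(-1)^2=1$, leaving precisely $-\varepsilon(y-x)\overline B(-x)\overline{B'}(-y)$.

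The second formula is handled in the same manner, but starting from \eqref{eq3.6'} with $C=BB'$ (so again $\overline B\,\overline{B'}C=\varepsilon$, and $\overline A\,\overline BC=\overline AB'$), then applying \eqref{eq3.2} with $x\mapsto x$, $y\mapsto\f{x-y}{1-y}$ and with $\overline ABB'$ in the role of the parameter $A$ there. Here the relations $x-\f{x-y}{1-y}=\f{y(1-x)}{1-y}$ and $\overline{(\overline ABB')}\,BB'=A$ do the work: the outer factor $BB'(-1)\overline AB'(1-x)\overline{B'}(1-y)$ absorbs the $A(1-x)$, $\overline{B'}(1-x)$ and $B'(1-y)$ produced by \eqref{eq3.2} into $\varepsilon(1-x)\varepsilon(1-y)$, the leftover characters telescope once more to $-\varepsilon(x-y)\overline B(-x)\overline{B'}(-y)$, and the hypergeometric part becomes $\varepsilon(xy)BB'(-1)\overline AB'(1-x)\overline{B'}(1-y)\,{}_2\Bbb F_1$ as stated.

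The structural derivation is thus immediate from Theorems~\ref{thm3.1} and \ref{thm3.3}; the only genuine difficulty---and the step most prone to slips---is the character bookkeeping in the two error terms: tracking the powers of $-1$, $x-1$, $1-x$ and $1-y$, repeatedly collapsing $\chi\,\overline\chi$ to $\varepsilon$ on nonzero arguments and using $\chi(-1)^2=1$, and then checking the degenerate cases $x=1$ and $y=1$, where the substitutions fail but the surviving $\varepsilon$-factors vanish, so no information is lost.
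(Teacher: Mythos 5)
Your proposal is correct and is exactly the paper's argument: the authors also prove Corollary \ref{cor3.2} by setting $C=BB'$ in \eqref{eq3.5'} and \eqref{eq3.6'} so that the inner $\Fa_1$ has trivial parameter $\overline B\overline{B'}C=\varepsilon$, and then reducing via Theorem \ref{thm3.1}. Your character bookkeeping for the error terms checks out, and you in fact supply more detail than the paper, which leaves that computation to the reader.
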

\begin{proof} Putting $C=BB'$ into (\ref{eq3.5'}) and (\ref{eq3.6'}), respectively, and applying Theorem \ref{thm3.1}, we can obtain the
desired result.
 \end{proof}
 \textbf{Remark 3.} Corollary \ref{cor3.2} gives
finite field analogues of the following reduction formulas
  (see, \cite[26a]{MJ}):
$$ F_1(a;b,b';b+b';x,y)=(1-x)^{-a}\
_2F_1\left[\begin{array}{rr}a,&b'\\&b+b'\end{array};\f{y-x}{1-x}\right]$$
and $$F_1(a;b,b';b+b';x,y) =(1-x)^{b'-a}(1-y)^{-b'}\
_2F_1\left[\begin{array}{rr}b+b'-a,&b'\\&b+b'\end{array};\f{x-y}{1-y}\right].$$

\begin{thm}\label{thm3.7} For characters $A,B,B',C$ of $\Fq$ and $x,y\in \Bbb F_q\setminus\{0,1\}$ then
$$\Bbb F_1(A;B,B';C;x,y)=BB'(-1)\Bbb F_1(A;B,B';ABB'\overline C;1-x,1-y).$$
\end{thm}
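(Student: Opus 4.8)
The plan is to evaluate both sides straight from Definition~\ref{df1.1} and to match the two resulting character sums after one M\"obius change of summation variable. Set $C^{*}=ABB'\overline C$. First I would expand the right-hand side: by definition
$$BB'(-1)\,\Fa_1(A;B,B';C^{*};1-x,1-y)=BB'(-1)\,\varepsilon\big((1-x)(1-y)\big)\,AC^{*}(-1)\sum_{u}A(u)\,\overline AC^{*}(1-u)\,\overline B\big(1-u(1-x)\big)\,\overline{B'}\big(1-u(1-y)\big).$$
Here $\overline AC^{*}=BB'\overline C$, and $AC^{*}(-1)=A^{2}BB'\overline C(-1)=B(-1)B'(-1)C(-1)$ since $A(-1)^{2}=1$ and $\overline C(-1)=C(-1)$; hence the constant in front collapses to $BB'(-1)\,AC^{*}(-1)=C(-1)$. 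Because the hypothesis $x,y\in\Bbb F_q\setminus\{0,1\}$ gives $\varepsilon(xy)=\varepsilon\big((1-x)(1-y)\big)=1$, the claim reduces to
$$AC(-1)\sum_{v}A(v)\,\overline AC(1-v)\,\overline B(1-vx)\,\overline{B'}(1-vy)=C(-1)\sum_{u}A(u)\,BB'\overline C(1-u)\,\overline B\big(1-u(1-x)\big)\,\overline{B'}\big(1-u(1-y)\big).$$

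The decisive step is the substitution $u=\frac{v}{v-1}$ in the right-hand sum. This map is an involution of $\Bbb F_q\setminus\{1\}$, and the terms with $u=1$ and $v=1$ contribute $0$ to the respective sums (a multiplicative character vanishes at $0$), so they may be dropped. Under the substitution one computes $1-u=\frac1{1-v}$, $1-u(1-x)=\frac{1-vx}{1-v}$, $1-u(1-y)=\frac{1-vy}{1-v}$ and $A(u)=A(-1)A(v)\overline A(1-v)$. Using only $\chi(ab)=\chi(a)\chi(b)$ and $\chi(a^{-1})=\overline\chi(a)$, all the powers of $1-v$ collect into $\overline A(1-v)\,\overline{BB'}C(1-v)\,B(1-v)\,B'(1-v)=\overline AC(1-v)$, and the right-hand sum becomes $A(-1)\sum_{v}A(v)\,\overline AC(1-v)\,\overline B(1-vx)\,\overline{B'}(1-vy)$. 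Multiplying by the external $C(-1)$ reproduces the left-hand side exactly, which completes the proof.

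The computation is entirely routine once the substitution is spotted; the only real decision is recognizing that $u\mapsto v/(v-1)$ — a M\"obius substitution of the kind already used in the proof of Theorem~\ref{thm3.3} — is the map that trades the arguments $1-x,1-y$ for $x,y$. I expect the only friction to be the bookkeeping of the signs $\chi(-1)$ and checking that the three ``lower'' characters $C$, $C^{*}=ABB'\overline C$ and $\overline AC^{*}=BB'\overline C$ recombine correctly; this, together with the need for both $\varepsilon$-factors to equal $1$, is exactly why $x$ and $y$ are required to avoid $0$ and $1$.
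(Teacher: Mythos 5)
Your proof is correct and is essentially the paper's own argument: the paper proves the identity by applying the same M\"obius substitution $u=\frac{v}{v-1}$ directly to the left-hand side of Definition~\ref{df1.1}, whereas you run the identical substitution in the reverse direction starting from the right-hand side. The constant bookkeeping ($BB'(-1)\,AC^{*}(-1)=C(-1)$, the recombination of the powers of $1-v$ into $\overline AC(1-v)$) and the use of $x,y\notin\{0,1\}$ to equate the $\varepsilon$-factors all match the paper's computation.
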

\begin{proof} Setting $u=\f v{v-1}$ in the definition of $\Bbb F_1(A;B,B';C;x,y)$ and applying
$\varepsilon(x)=\varepsilon(1-x)$ we have
\begin{align*}&\Bbb F_1(A;B,B';C;x,y)=\varepsilon(xy)AC(-1)\sum_uA(u)\overline AC(1-u)\overline B(1-ux)\overline{B'}(1-uy)
\\&=\varepsilon((1-x)(1-y))C(-1)\sum_vA(v)BB'\overline C(1-v)\overline
B\big(1-v(1-x)\Big)\overline{B'}\Big(1-v(1-y)\Big)\\&= BB'(-1)\Bbb
F_1(A;B,B';ABB'\overline C;1-x,1-y).
\end{align*}
The proof is completed.
\end{proof}

If we set $y=1$ and $B'=\varepsilon$ in Theorem \ref{thm3.7} and combine this with
(\ref{eq1.5}) and the identity $\varepsilon(x)=\varepsilon(1-x)+\delta(1-x)-\delta(x)$ we obtain
Theorem 4.4 from \cite{GJ}:
\begin{cor}\label{cor3.3} For characters $A,B,C$ of $\Fq$ and $x\in \Bbb F_q$
we have
\begin{align*}_2\Bbb F_1\left[\begin{array}{rr}B,&A\\&C\end{array};x\right]&=B(-1)\ _2\Bbb F_1\left[\begin{array}{rr}B,&A\\&AB\overline C\end{array};1-x\right]\\&\q
+B(-1)\b A{\overline BC}\delta(1-x)-\b AC\delta(x).
\end{align*}
\end{cor}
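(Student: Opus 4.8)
The plan is to follow the route indicated just before the statement: realise the left-hand ${}_2\Bbb F_1$ as a boundary value of $\Fa_1$ through \eqref{eq1.5}, then rerun the substitution used in the proof of Theorem~\ref{thm3.7}, now carrying the factor $\varepsilon(x)$ through the computation rather than rewriting it. Setting $B'=\varepsilon$ in \eqref{eq1.5} gives, for every $x\in\Bbb F_q$,
$$ {}_2\Bbb F_1\left[\begin{array}{rr}B,&A\\&C\end{array};x\right]=\Fa_1(A;B,\varepsilon;C;x,1).$$
Expanding the right-hand side by Definition~\ref{df1.1}, the factor $\overline\varepsilon(1-u)=\varepsilon(1-u)$ deletes the term $u=1$ (which is zero in any case, since $\overline AC(0)=0$), so this object equals $\varepsilon(x)AC(-1)\sum_{u\neq1}A(u)\,\overline AC(1-u)\,\overline B(1-ux)$.

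Into that sum I would substitute $u=\f v{v-1}$, the involution of $\Bbb F_q\setminus\{1\}$ already used in the proof of Theorem~\ref{thm3.7}. Using $1-u=\f1{1-v}$, $1-ux=\f{1-v(1-x)}{1-v}$, together with $\overline\chi(-1)=\chi(-1)$ and $\chi(-1)^2=1$, the summand collapses to $A(-1)A(v)\,B\overline C(1-v)\,\overline B(1-v(1-x))$; the $v=1$ term of the resulting sum again vanishes because $B\overline C(0)=0$, so after restoring it one obtains
$$ {}_2\Bbb F_1\left[\begin{array}{rr}B,&A\\&C\end{array};x\right]=\varepsilon(x)\,S,\qquad S:=C(-1)\sum_vA(v)\,B\overline C(1-v)\,\overline B(1-v(1-x)).$$
Conversely, a direct expansion from the definition of ${}_2\Bbb F_1$, using $\overline A\cdot AB\overline C=B\overline C$ and $\bigl(A\cdot AB\overline C\bigr)(-1)=B(-1)C(-1)$, shows that the same sum $S$ appears in
$$ {}_2\Bbb F_1\left[\begin{array}{rr}B,&A\\&AB\overline C\end{array};1-x\right]=\varepsilon(1-x)\,B(-1)\,S,$$
whence $B(-1)\,{}_2\Bbb F_1\!\left[\begin{array}{rr}B,&A\\&AB\overline C\end{array};1-x\right]=\varepsilon(1-x)\,S$.

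Subtracting the two expressions and invoking the identity $\varepsilon(x)=\varepsilon(1-x)+\delta(1-x)-\delta(x)$ gives
$$ {}_2\Bbb F_1\left[\begin{array}{rr}B,&A\\&C\end{array};x\right]-B(-1)\,{}_2\Bbb F_1\left[\begin{array}{rr}B,&A\\&AB\overline C\end{array};1-x\right]=\bigl(\delta(1-x)-\delta(x)\bigr)\,S.$$
It then only remains to evaluate $S$ at the two surviving points. At $x=1$ the last factor becomes $\overline B(1)=1$, so $S=C(-1)J(A,B\overline C)$, which equals $B(-1)\b A{\overline BC}$ once the Jacobi sum is rewritten through $\b AD=D(-1)J(A,\overline D)$; at $x=0$ the factors $B\overline C(1-v)\,\overline B(1-v)$ combine to $\overline C(1-v)$, giving $S=C(-1)J(A,\overline C)=\b AC$ directly from the definition $\b AB=B(-1)J(A,\overline B)$. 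Feeding these two values into the previous display produces exactly the asserted identity.

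The argument is a routine unwinding of definitions, and I do not expect a serious obstacle. The one place that needs care is the bookkeeping of the scalars $\chi(-1)$ thrown off by the substitution $u=\f v{v-1}$: they must collapse down to precisely the factor $B(-1)$ sitting in front of the transformed ${}_2\Bbb F_1$. Apart from that, one only has to match the boundary Jacobi sums $J(A,B\overline C)$ and $J(A,\overline C)$ with the symbols $\b A{\overline BC}$ and $\b AC$, which is immediate from $\b AB=B(-1)J(A,\overline B)$, so no appeal to Proposition~\ref{pro2.1} is required here.
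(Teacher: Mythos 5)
Your proof is correct and follows essentially the same route the paper sketches: realise the left side as $\Fa_1(A;B,\varepsilon;C;x,1)$ via \eqref{eq1.5}, run the involution $u=v/(v-1)$ from the proof of Theorem~\ref{thm3.7}, and peel off the boundary terms with $\varepsilon(x)=\varepsilon(1-x)+\delta(1-x)-\delta(x)$. The only added value over the paper's one-line derivation is that you rerun the substitution directly (rather than citing Theorem~\ref{thm3.7}, which is stated only for $x,y\notin\{0,1\}$), and your bookkeeping of the $\chi(-1)$ factors and of the boundary Jacobi sums checks out.
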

\section{Generating Functions for $\Fa_1$}
\q Generating functions play an important role in many fields of
Mathematics. In this section we obtain two generating functions for
$\Bbb F_1(A;B,B';C;x,y).$
\begin{thm}\label{thm4.1}For $A,B,B',C\in\Fq $ and $x,y,t\in \Bbb F_q$ we have
\begin{equation*}
\begin{split}&\f1{q-1}\sum_{\theta}\b{A\overline C\theta}\theta\Bbb F_1(A\theta;B,B';C;x,y)\theta(t)\\&=
\varepsilon(t)\overline A(1-t)\Bbb F_1\left(A;B,B';C;\f x{1-t},\f
y{1-t}\right)-\varepsilon(xy)\overline AC(-t)\overline
B(1-x)\overline{B'}(1-y).
\end{split}\end{equation*}
\end{thm}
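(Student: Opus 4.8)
The plan is to work directly from the binomial-coefficient representation of $\Bbb F_1$ given in Theorem \ref{thm1.3}, expand the left-hand side, and collapse the sum over $\theta$. Writing
$$\Bbb F_1(A\theta;B,B';C;x,y)=\f1{(q-1)^2}\sum_{\chi,\lambda}\b{A\theta\chi\lambda}{C\chi\lambda}\b{B\chi}\chi\b{B'\lambda}\lambda\chi(x)\lambda(y),$$
the left-hand side becomes a triple sum over $\theta,\chi,\lambda$. Grouping the $\theta$-dependent factors, I would isolate
$$\f1{q-1}\sum_\theta\b{A\overline C\theta}\theta\b{A\theta\chi\lambda}{C\chi\lambda}\theta(t).$$
The first idea is to simplify $\b{A\overline C\theta}\theta\b{A\theta\chi\lambda}{C\chi\lambda}$ using the product rule for binomial coefficients in Proposition \ref{pro2.1}, namely $\b CA\b AB=\b CB\b{C\overline B}{A\overline B}-(q-1)(\cdots)$, with a suitable choice of the "middle" character so that the dependence on $\theta$ in one of the resulting factors disappears. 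Concretely, one wants to rewrite the product as (something independent of $\theta$) $\times\ \b{\text{(linear in }\theta)}{\text{(linear in }\theta)}$ so that summing $\theta(t)$ against the surviving binomial coefficient can be handled by the binomial theorem (Proposition \ref{pro2.2}) in the form $\sum_\theta\b{D\theta}\theta\theta(s)=(q-1)(\overline D(1-s)-\delta(s))$.

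The key steps, in order: (1) expand the left side via Theorem \ref{thm1.3} and interchange the order of summation to bring the $\theta$-sum inside; (2) apply Proposition \ref{pro2.1} to factor the two $\theta$-binomials, producing a main term plus one or two $\delta$-correction terms coming from the $(q-1)(B(-1)\delta(A)-AB(-1)\delta(B\overline C))$ piece; (3) evaluate the resulting $\theta$-sum by the binomial theorem, which turns it into a factor $\overline A(1-\text{something})$ together with a $\delta$-term; (4) substitute back: the main term should reassemble, using Theorem \ref{thm1.3} again in reverse, into $\varepsilon(t)\overline A(1-t)\,\Bbb F_1(A;B,B';C;x/(1-t),y/(1-t))$, after absorbing the character factors of $1-t$ into the arguments $x,y$ (this is exactly the kind of argument-shift seen in the proofs of Theorem \ref{thm3.3}); (5) identify the $\delta$-correction terms. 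One of them should vanish or combine, and the surviving one should be precisely $-\varepsilon(xy)\overline AC(-t)\overline B(1-x)\overline{B'}(1-y)$ — here I would expect to recognize $\sum_{\chi,\lambda}\b{B\chi}\chi\b{B'\lambda}\lambda\chi(x/\text{?})\lambda(y/\text{?})$ as reproducing $\overline B(1-x)\overline{B'}(1-y)$ via the binomial theorem, with the $\overline AC(-t)$ factor coming from evaluating the $\theta$-correction at the relevant character.

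The main obstacle I anticipate is bookkeeping in step (2)–(5): getting the arguments of the binomial coefficients lined up so that Proposition \ref{pro2.1} applies cleanly, and then correctly tracking the several $\delta$-terms (from $\delta(A\overline C\theta\cdot(\cdots))$-type conditions and from the $\delta(1-s)$ in the binomial theorem) to see that all but one cancel. In particular one must be careful about the characters evaluated at $-1$ and at $1-t$, and about the degenerate cases $t=0$ and $t=1$, where the claimed identity should still hold by a direct check from Definition \ref{df1.1} — at $t=0$ the left side is the single term $\theta=\varepsilon$ (since $\b{A\overline C\theta}\theta\theta(0)=0$ unless $\theta=\varepsilon$, giving $\b{A\overline C}\varepsilon\,\Bbb F_1(A;B,B';C;x,y)$, and $\b{A\overline C}\varepsilon=\overline{\varepsilon}(1-0)\cdot(q-1)/(q-1)$-type normalization must match the right side's $\overline A(1)=1$ factor), and at $t=1$ both sides should reduce to the stated boundary term. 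Once the shift-of-variable identity and the binomial-theorem evaluation are in hand, the rest is routine simplification using Propositions \ref{pro2.1}–\ref{pro2.3}.
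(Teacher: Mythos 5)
Your plan is workable, but it is not the route the paper takes for this theorem: the paper proves Theorem \ref{thm4.1} directly from Definition \ref{df1.1}, substituting $u=v(1-t)$ in the sum defining $\Bbb F_1\bigl(A;B,B';C;\f x{1-t},\f y{1-t}\bigr)$ and applying the binomial theorem once to $\overline AC\left(1+\f{vt}{1-v}\right)$, which is considerably shorter. What you describe — expand by Theorem \ref{thm1.3}, break the product of the two $\theta$-dependent binomials with Proposition \ref{pro2.1}, then collapse the $\theta$-sum with the binomial theorem — is precisely the method the paper uses for Theorem \ref{thm4.2}, so it is a legitimate alternative here. Concretely, writing $\b{A\theta\chi\lambda}{C\chi\lambda}=\b{A\theta\chi\lambda}{A\overline C\theta}$ and applying Proposition \ref{pro2.1} to $\b{A\theta\chi\lambda}{A\overline C\theta}\b{A\overline C\theta}{\theta}$ gives the main term $\b{A\theta\chi\lambda}{\theta}\b{A\chi\lambda}{C\chi\lambda}$, whose $\theta$-sum equals $(q-1)\bigl(\overline{A\chi\lambda}(1-t)-\delta(t)\bigr)$ and reassembles into $\varepsilon(t)\overline A(1-t)\,\Bbb F_1\bigl(A;B,B';C;\f x{1-t},\f y{1-t}\bigr)$, while the correction $-(q-1)\theta(-1)\delta(A\overline C\theta)$ forces $\theta=\overline AC$ and yields exactly $-\varepsilon(xy)\overline AC(-t)\overline B(1-x)\overline{B'}(1-y)$.

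The one genuine gap is your step (5), ``one of them should vanish or combine.'' The second correction from Proposition \ref{pro2.1}, namely $+(q-1)A\overline C(-1)\delta(A\chi\lambda)$, does \emph{not} vanish: summing $\theta(t)$ over $\theta$ turns it into $\f{A\overline C(-1)}{q-1}\,\delta(t-1)\sum_\chi\b{B\chi}\chi\b{B'\overline{A\chi}}{\overline{A\chi}}\chi(x)\overline{A\chi}(y)$, a term supported at $t=1$ that is generically nonzero (try $A=B=B'=\varepsilon$ with $x\neq y$ both nonzero). So your argument establishes the identity for $t\neq1$ and in fact reveals that, read literally at $t=1$, the stated formula is missing this extra term; the paper's own proof has the same blind spot, since the substitution $u=v(1-t)$ degenerates there. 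You would need to either restrict to $t\neq1$ or record the extra $\delta(t-1)$ term. Finally, your remark about $t=0$ is off: under the paper's convention $\theta(0)=0$ for every character including $\varepsilon$, so the left side is identically $0$ at $t=0$ (no surviving $\theta=\varepsilon$ term), and the right side is $0$ as well since $\varepsilon(0)=\overline AC(0)=0$; nothing needs to be matched there.
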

\begin{proof}Replacing $u$ by $v(1-t)$ and using $$\overline AC\left(1+\f{vt}{1-v}\right)=\delta\left(\f{vt}{1-v}\right)+
\f1{q-1}\sum_{\theta}\b{A\overline
C\theta}\theta\theta\left(\f{-vt}{1-v}\right)$$ we obtain
\begin{align*}&\varepsilon(t)\overline A(1-t)\Bbb F_1\left(A;B,B';C;\f x{1-t},\f y{1-t}\right)
\\&=\varepsilon(t)\varepsilon\left(\f{xy}{(1-t)^2}\right)AC(-1)\sum_{u}A(u)\overline
AC(1-u)\overline B\left(1-\f {ux}{1-t}\right)\overline
{B'}\left(1-\f {uy}{1-t}\right) \overline
A(1-t)\\&=\varepsilon(t)\varepsilon(xy)AC(-1)\sum_{v}A(v)\overline
 AC(1-v+vt)\overline B(1-vx)\overline{B'}(1-vy)\\&=\varepsilon(t)\varepsilon(xy)AC(-1)
 \Big(\sum_{v\neq1}A(v)\overline AC(1-v)\overline B(1-vx)\overline{B'}(1-vy)
 \overline AC\left(1+\f{vt}{1-v}\right)\\&\q+\overline AC(t)\overline B(1-x)\overline{B'}(1-y)
 \Big)\\&=\f1{q-1}\sum_{\theta}\b{A\overline C\theta}\theta\varepsilon(xy)AC
\theta(-1)\sum_{v}A\theta(v)\overline A\overline \theta
C(1-v)\overline B(1-vx)
\overline{B'}(1-vy)\theta(t)\\&\q+\varepsilon(xy)\overline
AC(-t)\overline
B(1-x)\overline{B'}(1-y)\\&=\f1{q-1}\sum_{\theta}\b{A\overline
C\theta} \theta\Bbb
F_1(A\theta;B,B';C;x,y)\theta(t)+\varepsilon(xy)\overline
AC(-t)\overline B(1-x)\overline{B'}(1-y).
\end{align*}
This completes the proof.
\end{proof}
\textbf{Remark 4.} Theorem \ref{thm4.1} could be regarded as the
finite field analogue of \cite{JP}
$$\sum_{k=0}^\infty\b{a+k-1}kF_1(a+k;b,b';c;x,y)t^k=(1-t)^{-a}F_1\left(a;b,b';c;\f
x{1-t},\f y{1-t}\right),\q (|t|<1).$$
%Combining Theorem 3.1 with (\ref{eq3.4'}) we obtain the following transformation formula for $\Bbb F_1(A;B,B';C;x,y)$ by equating the coefficients of $\theta(t)$ on the both sides of equation in the Theorem 4.1.
%\begin{thm}If $\theta\neq\overline AC$ then
%$$\b{A\overline C\theta}\theta\Bbb F_1(A\theta;B,B';C;x,y)=C(-1)B(1-x)B'(1-y)\b{A\theta}\theta\Bbb F_1(A;B,B';C;x,y).$$
%\end{thm}
%\begin{thm}$$\b{A\overline C\theta}\theta\b{A\theta\chi\lambda}{C\chi\lambda}\b{B\chi}\chi\b{B'\lambda}\lambda=C(-1)\b{A\theta}\theta\sum_{\eta,\kappa}
%\b{B\eta}\eta\b{B'\kappa}\kappa\b{A\overline{\eta\kappa}\chi\lambda}
%{A\overline C}\b{B\overline \eta\chi}B\b{B'\overline\kappa\lambda}{B'}.$$
%\end{thm}
%\begin{proof}If $xy\neq0$ then
%\begin{align*}&\b{A\overline C\theta}\theta\sum_{\chi,\lambda}\b{A\theta\chi\lambda}{C\chi\lambda}\b{B\chi}\chi\b{B'\lambda}\lambda\chi(x)\lambda(y)
%\\&=C(-1)\b{A\theta}\theta\left(\delta(x)+\sum_{\eta}\b{B\eta}\eta\eta(x)\right)\left(\delta(y)+\sum_{\kappa}\b{B'\kappa}
%\kappa\kappa(x)\right)\\&\q\times
%\sum_{\chi,\lambda}\b{A\chi\lambda}{C\chi\lambda}\b{B\chi}\chi\b{B'\lambda}\lambda\chi(x)\lambda(y)
%\\&=C(-1)\b{A\theta}\theta
%\sum_{\eta,\kappa}\b{B\eta}
%\eta\b{B'\kappa}\kappa\sum_{\chi,\lambda}\b{A\chi\lambda}{C\chi\lambda}\b{B\chi}\chi\b{B'\lambda}\lambda\eta\chi(x)
%\kappa\lambda(y)
%\\&=C(-1)\b{A\theta}\theta\sum_{\chi,\lambda}\sum_{\eta,\kappa}\b{B\eta}\eta\b{B'\kappa}\kappa\b{A\overline{\eta\kappa}\chi\lambda}
%{A\overline C}\b{B\overline \eta\chi}B\b{B'\overline\kappa\lambda}{B'}\chi(x)\lambda(y).
%\end{align*}
%Then we complete the proof.
%\end{proof}
\begin{thm}\label{thm4.2} For $A,B,B',C\in\Fq $ and $x,y,t\in \Bbb F_q$ we obtain
\begin{align*}\f1{q-1}\sum_\theta\b{B\theta}\theta\Bbb F_1(A;B\theta,B';C;x,y)\theta(t)&=
\varepsilon(t)\overline B(1-t)\Bbb F_1\left(A;B,B';C;\f
x{1-t},y\right)\\&\q-\varepsilon(y)\overline B(-t) B'\overline
C(x)\overline AC(1-x)\overline{B'}(x-y).
\end{align*}
\end{thm}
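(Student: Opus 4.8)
The plan is to run the same argument as in the proof of Theorem \ref{thm4.1}, but now with the parameter $B$ playing the role that the first numerator parameter played there. I would start from the right-hand side, expand the first term by Definition \ref{df1.1}, and reintroduce the sum over $\theta$ via the Binomial Theorem (Proposition \ref{pro2.2}). One may assume $t\neq0,1$ and $x,y\neq0$, since the remaining cases are immediate: if $t=0$ then $\varepsilon(t)=0$, $\overline B(-t)=0$ and $\theta(t)=0$ for all $\theta$, so all three terms vanish; if $x=0$ or $y=0$ then every $\Bbb F_1(A;B\theta,B';C;x,y)=0$ while the first term carries a factor $\varepsilon\!\left(\f{xy}{1-t}\right)$ and the second a factor $B'\overline C(x)$ (resp. $\varepsilon(y)$), so again all vanish; and for $t=1$ the first term is read as $0$ through its factor $\overline B(0)$, exactly as in Theorem \ref{thm4.1}.

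First, using Definition \ref{df1.1} together with $\varepsilon(t)\varepsilon\!\left(\f{xy}{1-t}\right)=\varepsilon(xy)$, the first term of the right-hand side equals $\varepsilon(xy)AC(-1)\sum_u A(u)\overline AC(1-u)\,\overline B(1-t)\,\overline B\!\left(1-\f{ux}{1-t}\right)\overline{B'}(1-uy)$. I would then absorb the external character by multiplicativity, $\overline B(1-t)\,\overline B\!\left(1-\f{ux}{1-t}\right)=\overline B(1-t-ux)$, so the first term becomes $\varepsilon(xy)AC(-1)\sum_u A(u)\overline AC(1-u)\,\overline B(1-t-ux)\,\overline{B'}(1-uy)$.

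The next step factors $1-t-ux=(1-ux)\!\left(1-\f{t}{1-ux}\right)$, which forces me to set aside the single term $u=1/x$; write $S:=A(1/x)\overline AC(1-1/x)\overline B(-t)\overline{B'}(1-y/x)$. For $ux\neq1$ I would write $\overline B(1-t-ux)=\overline B(1-ux)\,\overline B\!\left(1-\f{t}{1-ux}\right)$ and expand the last factor by Proposition \ref{pro2.2} into $\delta(t)+\f1{q-1}\sum_\theta\b{B\theta}\theta\,\theta(t)\,\overline\theta(1-ux)$; the $\delta(t)$ piece drops since $t\neq0$. Using $\overline B(1-ux)\,\overline\theta(1-ux)=\overline{B\theta}(1-ux)$ and re-including the $u=1/x$ term at no cost (it vanishes because $\overline{B\theta}(0)=0$), the inner sum becomes $\sum_u A(u)\overline AC(1-u)\,\overline{B\theta}(1-ux)\,\overline{B'}(1-uy)=\varepsilon(xy)AC(-1)\,\Bbb F_1(A;B\theta,B';C;x,y)$. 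Hence, since $\varepsilon(xy)^2=AC(-1)^2=1$, the first term of the right-hand side equals $\f1{q-1}\sum_\theta\b{B\theta}\theta\,\Bbb F_1(A;B\theta,B';C;x,y)\,\theta(t)+\varepsilon(xy)AC(-1)S$.

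It then remains only to check that $\varepsilon(xy)AC(-1)S$ equals the second term $\varepsilon(y)\overline B(-t)B'\overline C(x)\overline AC(1-x)\overline{B'}(x-y)$. This is a brief character computation: from $A(1/x)=\overline A(x)$, $\overline AC(1-1/x)=\overline AC(x-1)\,A\overline C(x)$ and $\overline{B'}(1-y/x)=\overline{B'}(x-y)\,B'(x)$, the factor $\overline A(x)A(x)=\varepsilon(x)=1$ cancels, while $\overline AC(x-1)=AC(-1)\,\overline AC(1-x)$ (using $\overline AC(-1)=AC(-1)$), so the two copies of $AC(-1)$ cancel and $\varepsilon(x)=\varepsilon(y)=1$. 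Substituting back and rearranging yields the theorem. The only step requiring real care is this final bookkeeping of the $(-1)$-factors and of the places where $x,y\neq0$ is used silently, together with the preliminary degenerate-case check; everything else is a mechanical variant of Theorem \ref{thm4.1}, with the substitution rescaling $u$ there replaced here by the plain factorization that swallows the factor $\overline B(1-t)$.
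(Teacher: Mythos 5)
Your proof is correct, but it takes a genuinely different route from the paper's. The paper proves Theorem \ref{thm4.2} entirely at the level of the binomial-coefficient double sum of Theorem \ref{thm1.3}: it expands $\varepsilon(t)\overline B(1-t)\Bbb F_1(A;B,B';C;\f x{1-t},y)$ by writing $\varepsilon(t)\overline{B\chi}(1-t)=\f1{q-1}\sum_\theta\b{B\chi\theta}{\theta}\theta(t)$, then uses the product relation of Proposition \ref{pro2.1} to reduce $\b{B\theta}{\theta}\b{B\chi\theta}{\chi}-\b{B\chi\theta}{\theta}\b{B\chi}{\chi}$ to $\delta(B\theta)$ and $\delta(B\chi)$ terms, and resums those into the closed-form correction. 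You instead work directly with Definition \ref{df1.1}, transplanting the proof of Theorem \ref{thm4.1} to the $B$-parameter: absorb $\overline B(1-t)$ into the summand as $\overline B(1-t-ux)$, factor off $\overline B(1-ux)$, and expand $\overline B\left(1-\f t{1-ux}\right)$ by Proposition \ref{pro2.2}, with the excluded point $u=1/x$ producing the boundary term $S$. I checked your final bookkeeping: $AC(-1)S$ does reduce to $\varepsilon(y)\overline B(-t)B'\overline C(x)\overline AC(1-x)\overline{B'}(x-y)$ via $\overline AC(x-1)=AC(-1)\overline AC(1-x)$, and your handling of the degenerate cases $t=0$, $x=0$, $y=0$ is in fact more explicit than the paper's (which only notes $x=0$ or $y=0$). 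Your approach buys self-containedness --- no appeal to the contiguous relation of Proposition \ref{pro2.1} --- and makes the origin of the correction term transparent, namely the single summand $u=1/x$; the paper's approach is more mechanical and handles all three character sums uniformly. One shared caveat: at $t=1$ the argument $x/(1-t)$ is undefined, and reading the first right-hand term as $0$ there is a convention, not a verification that the identity still holds; the paper's own proof has the same blemish (it silently discards a $\delta(t-1)$ contribution arising from $\sum_\theta\theta(t)$ in the $\delta(B\chi)$ term), so this is not a defect you introduced, though it would be worth either excluding $t=1$ from the statement or checking that case separately.
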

\begin{proof}
 Obviously, the above equation holds when $x=0$ or $y=0$. So we only need to consider the case
 $xy\neq0.$
Since
\begin{align*}&\b{B\theta}\theta\b{B\chi\theta}\chi=\b{B\theta}\theta\b{B\chi\theta}{B\theta}\\
&=\b{B\chi\theta}\theta\b{B\chi}\chi-(q-1)(\theta(-1)\delta(B\theta)-B(-1)\delta(B\chi)),\end{align*}
then combining the binomial theorem over finite fields we obtain
\begin{align*}&\f1{q-1}\sum_\theta\b{B\theta}\theta\Bbb F_1(A;B\theta,B';C;x,y)\theta(t)-\varepsilon(t)\overline B(1-t)\Bbb F_1\left(A;B,B';C;\f x{1-t},y\right)
\\&=\f1{(q-1)^3}\sum_{\chi,\lambda,\theta}\left(\b{B\theta}\theta\b{B\chi\theta}\chi-\b{B\chi\theta}\theta\b{B\chi}\chi\right)
\b{A\chi\lambda}
{C\chi\lambda}\b{B'\lambda}\lambda\chi(x)\lambda(y)\theta(t)\\&=\f1{(q-1)^2}\sum_{\chi,\lambda,\theta}\big(B(-1)
\delta(B\chi)-\theta(-1)\delta(B\theta)\big)\b{A\chi\lambda}
{C\chi\lambda}\b{B'\lambda}\lambda\chi(x)\lambda(y)\theta(t)\\&=-\f{\overline
B(-t)}{(q-1)^2}
\sum_{\chi,\lambda}\b{A\chi\lambda}{C\chi\lambda}\b{B'\lambda}\lambda\chi(x)\lambda(y)\\&
=-\f{\overline
B(-t)}{(q-1)^2}\sum_\lambda\b{B'\lambda}\lambda\lambda(y)\overline{C\lambda}(x)
\sum_\chi\b{A\overline C\chi}\chi\chi(x)\\&= -\f{\overline
B(-t)\overline
C(x)}{q-1}\sum_\lambda\b{B'\lambda}\lambda\lambda(y/x)\big(\overline
AC(1-x)-\delta(x)\big)
\\&=-\overline B(-t)\overline C(x)\overline
AC(1-x)\left(\overline{B'} (1-y/x)-\delta(y/x)\right)
\\&=-\overline B(-t)B'\overline C(x)\overline AC(1-x)\overline{B'}(x-y).
\end{align*}
This completes the proof of this theorem.
\end{proof}

From Theorems \ref{thm4.1} and \ref{thm4.2} we can derive two
generating functions for $_2\Bbb F_1$ which are shown in the
following theorem.
\begin{thm} For $A,B,C\in\Fq $ and $x,y,t\in \Bbb F_q$ we have
\label{thm4.4}
\begin{equation}\label{eq4.1}
\begin{split}\f1{q-1}\sum_\theta\b{A\overline C\theta}\theta\ _2\Bbb F_1\left[\begin{array}{rr}B,
&A\theta\\&C\end{array};x\right]\theta(t)&=\varepsilon(t)\overline
A(1-t)\ _2\Bbb F_1\left[\begin{array}{rr}B,&A\\&C\end{array};\f
x{1-t}\right]\\&\q-\varepsilon(x)A(1-t)\overline AC(t)\overline
B(1-x)\end{split}
\end{equation}
and
\begin{equation}\label{eq4.2}\begin{split}\f1{q-1}\sum_\theta\b{B\theta}\theta\ _2\Bbb F_1\left[\begin{array}{rr}B\theta,&A\\&C
\end{array};x\right]\theta(t)&=\varepsilon(t)\overline B(1-t)\ _2\Bbb F_1\left[\begin{array}{rr}B,&A\\&C\end{array};\f x{1-t}\right]
\\&\q-\overline B(-t)\overline A C(1-x)\overline C(x).
\end{split}
\end{equation}
\end{thm}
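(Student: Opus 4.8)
The plan is to read off \eqref{eq4.1} and \eqref{eq4.2} directly from the two generating functions in Theorems \ref{thm4.1} and \ref{thm4.2}, by setting $B'=\varepsilon$ and choosing $y$ so that every $\Bbb F_1$ that appears degenerates --- via Corollary \ref{cor1.1} or Theorem \ref{thm3.1} --- to a ${}_2\Bbb F_1$.

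To prove \eqref{eq4.1} I would take Theorem \ref{thm4.1} with $B'=\varepsilon$ and $y=1$. On the left-hand side, \eqref{eq1.5} (with $\overline\varepsilon C=C$ and $\varepsilon(-1)=1$) gives $\Bbb F_1(A\theta;B,\varepsilon;C;x,1)={}_2\Bbb F_1\left[\begin{array}{rr}B,&A\theta\\&C\end{array};x\right]$, which is exactly the left-hand side of \eqref{eq4.1}. On the right-hand side the correction term of Theorem \ref{thm4.1} carries the factor $\overline\varepsilon(1-1)=\varepsilon(0)=0$ and disappears, leaving $\varepsilon(t)\overline A(1-t)\,\Bbb F_1\left(A;B,\varepsilon;C;\f{x}{1-t},\f1{1-t}\right)$. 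I would then expand this inner $\Bbb F_1$ by \eqref{eq3.1}: the resulting ${}_2\Bbb F_1$, multiplied by $\varepsilon(t)\overline A(1-t)$, yields the first term of \eqref{eq4.1}, while the remaining summand of \eqref{eq3.1} --- which contains characters evaluated at $\f{-t}{1-t}$, $\f1{1-t}$ and $\f{1-x}{1-t}$ --- collapses, by the elementary rules $\chi(uv^{-1})=\chi(u)\overline\chi(v)$, $\chi(z)\overline\chi(z)=\varepsilon(z)$ and $\chi(-1)=\overline\chi(-1)$, to the correction term of \eqref{eq4.1}. (Taking instead $y=x$ and using \eqref{eq3.3} in place of \eqref{eq1.5} and \eqref{eq3.1} leads to the same identity.)

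For \eqref{eq4.2} I would take Theorem \ref{thm4.2} with $B'=\varepsilon$ and $y=x$. On the left-hand side \eqref{eq3.3} turns $\Bbb F_1(A;B\theta,\varepsilon;C;x,x)$ into ${}_2\Bbb F_1\left[\begin{array}{rr}B\theta,&A\\&C\end{array};x\right]$, producing the left-hand side of \eqref{eq4.2}; on the right-hand side the correction term of Theorem \ref{thm4.2} carries the factor $\overline\varepsilon(x-x)=\varepsilon(0)=0$ and again vanishes, leaving $\varepsilon(t)\overline B(1-t)\,\Bbb F_1\left(A;B,\varepsilon;C;\f{x}{1-t},x\right)$. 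Here the two arguments $\f{x}{1-t}$ and $x$ are distinct, so I would expand by Theorem \ref{thm3.1} (rather than \eqref{eq3.3}). Its ${}_2\Bbb F_1$-term combines with $\varepsilon(t)\overline B(1-t)$ to give the first term of \eqref{eq4.2}, and the remaining piece --- involving $\overline B\left(x-\f{x}{1-t}\right)=\overline B\left(\f{-xt}{1-t}\right)$, $\overline AC(1-x)$ and $(B\overline C)(x)$ --- simplifies, using $\overline B(1-t)B(1-t)=\varepsilon(1-t)$, $\overline B(-xt)=\overline B(-t)\overline B(x)$ and $(B\overline C)(x)\overline B(x)=\overline C(x)$, to the stated correction term $-\overline B(-t)\,\overline AC(1-x)\,\overline C(x)$.

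In both derivations one should also treat directly the values at which the substitutions built into Theorems \ref{thm4.1}, \ref{thm4.2} and \ref{thm3.1} degenerate --- most notably $t=1$ (where $\f{x}{1-t}$ is not an element of $\Bbb F_q$) and $x=0$ or $t=0$; for these a one-line computation suffices, or they follow from the corresponding special cases of Theorems \ref{thm4.1} and \ref{thm4.2}. The only part of the argument requiring real care is this final bookkeeping of the correction terms: carrying the several $\varepsilon$-factors forced by the convention $\chi(0)=0$ (in particular $\varepsilon(0)=0$) and checking that the products of character values at the rational arguments $\f{x}{1-t}$, $\f{-t}{1-t}$, $\f{1-x}{1-t}$, $\f{-xt}{1-t}$ contract to precisely the closed forms in \eqref{eq4.1} and \eqref{eq4.2}. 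I expect this, rather than any conceptual point, to be the main obstacle.
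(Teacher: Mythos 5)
Your proposal is correct and, for \eqref{eq4.1}, is essentially the paper's own argument: the paper likewise specializes Theorem \ref{thm4.1} at $B'=\varepsilon$, $y=1$, uses \eqref{eq1.5} on the left-hand side, and then evaluates $\Bbb F_1\left(A;B,\varepsilon;C;\frac x{1-t},\frac 1{1-t}\right)$ by splitting off the $u=1-t$ term of the defining sum --- which is exactly the computation packaged in \eqref{eq3.1} that you invoke instead. For \eqref{eq4.2} you deviate slightly: you specialize Theorem \ref{thm4.2} at $y=x$ and reduce via \eqref{eq3.3} and \eqref{eq3.1}, whereas the paper (which omits the details, saying only that the proof is similar) takes $y=1$ and reduces via \eqref{eq1.5}; the latter is marginally cleaner because the correction term of Theorem \ref{thm4.2} then passes directly to the correction term of \eqref{eq4.2} with no further expansion of the residual $\Bbb F_1$, but your route closes just as well. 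One remark: carrying out your bookkeeping for \eqref{eq4.1} yields the correction term $-\varepsilon(x)\,\overline{A}C(-t)\,\overline B(1-x)$, which agrees with the last line of the paper's own proof but not literally with the form $-\varepsilon(x)A(1-t)\overline{A}C(t)\overline B(1-x)$ displayed in the theorem (the two differ by $A(1-t)$ versus $\overline{A}C(-1)$, apparently because the prefactor $\overline A(1-t)AC(-1)$ was not absorbed when the statement was typeset); this is an inconsistency internal to the paper, not a gap in your argument. The edge cases you flag ($t=1$, and $x=0$ or $t=0$) are indeed the only places requiring separate verification.
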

\begin{proof} First we prove (\ref{eq4.1}). Setting $y=1$ and $B'=\varepsilon$ into Theorem \ref{thm4.1} and applying
(\ref{eq1.5}) we have
\begin{align*}&\f1{q-1}\sum_\theta\b{A\overline C\theta}\theta\ _2\Bbb F_1\left[\begin{array}{rr}B,&A\theta\\&C\end{array};x\right]\theta(t)=\varepsilon(t)\overline A(1-t)
\Bbb F_1\left(A;B,\varepsilon;C;\f
x{1-t},\f1{1-t}\right)\\&=\varepsilon(xt)\overline
A(1-t)AC(-1)\sum_uA(u)\overline AC(1-u) \overline
B\left(1-\f{ux}{1-t}\right)\varepsilon\left(1-\f u{1-t}\right)
\\&=\varepsilon(xt)\overline A(1-t)AC(-1)\sum_{u\neq1-t}A(u)\overline AC(1-u)\overline B\left(1-\f{ux}{1-t}\right)
\\&=\varepsilon(xt)\overline A(1-t)AC(-1)\sum_uA(u)\overline AC(1-u)\overline B\left(1-\f{ux}{1-t}\right)-\varepsilon(x)
\overline AC(-t)\overline B(1-x)\\&=\varepsilon(t)\overline A(1-t)\
_2\Bbb F_1\left[\begin{array}{rr}B,&A\\&C\end{array};\f
x{1-t}\right]-\varepsilon(x)\overline AC(-t)\overline B(1-x).
\end{align*}
The proof of (\ref{eq4.2}) is similar, setting $y = 1$ and $B' =
\varepsilon$ in Theorem \ref{thm4.2}. The details are omitted.
\end{proof}
\end{document}